\numberwithin{equation}{section}
\numberwithin{subsection}{section}
\newtheorem*{namedtheorem}{\theoremname}
\newcommand{\theoremname}{testing}
\newtheorem{theorem}{Theorem}[section]
\newtheorem{proposition}[theorem]{Proposition}
\newtheorem{proposition-definition}[theorem]
{Proposition-Definition}
\newtheorem{corollary}[theorem]{Corollary}
\newtheorem{lemma}[theorem]{Lemma}
\theoremstyle{definition}
\newtheorem{definition}[theorem]{Definition}
\newtheorem{example}[theorem]{Example}
\newtheorem{remark}[theorem]{Remark}
\theoremstyle{remark}
\newcommand\cA{\mathcal{A}} 
\newcommand\cC{\mathcal{C}} \newcommand\cD{\mathcal{D}}
 \newcommand\cF{\mathcal{F}}
\newcommand\cI{\mathcal{I}} 
\newcommand\cK{\mathcal{K}} \newcommand\cL{\mathcal{L}}
\newcommand\cM{\mathcal{M}} 
\newcommand\cO{\mathcal{O}} 
\newcommand\cS{\mathcal{S}} \newcommand\cT{\mathcal{T}}
\newcommand\cU{\mathcal{U}}
\newcommand\gr{\mathrm{gr}}
 \newcommand\DD{\mathbb{D}}
 \newcommand\PP{\mathbb{P}}
 \newcommand\ZZ{\mathbb{Z}}
\newcommand\OO{\mathcal{O}_C}
\newcommand\arr{\ifinner\to\else\longrightarrow\fi}
\newcommand\arrto{\ifinner\mapsto\else\longmapsto\fi}
\def\displaytimes_#1{\mathrel{\mathop{\times}\limits_{#1}}}
\def\displayotimes_#1{\mathrel{\mathop{\bigotimes}\limits_{#1}}}
\newcommand\pic{\operatorname{Pic}}
\newcommand\doublelong[2]{\mathbin{\xymatrix{{}\ar@<3pt>[r]^{#1}
\ar@<-3pt>[r]_{#2}&}}}
\newlength{\ignora}
\newcommand{\lra}{\longrightarrow}
\newcommand{\ra}{\rightarrow}
\newcommand{\uu}{\mathcal{U}_C(r,0)} 
\newcommand{\su}{\mathcal{SU}_C(r)}
\newcommand{\urg}{\mathcal{U}_C(r,rg)}
\newfont{\sheaf}{eusm10 scaled\magstep1}
\begin{document}

\title[Coherent systems and subvarieties of $\mathcal{SU}_C(r)$]{Coherent systems and modular subvarieties of $\mathcal{SU}_C(r)$.}

\author[Bolognesi]{Michele Bolognesi}
\address{IRMAR\\263 Av. du Général Leclerc\\35042 Rennes\\France}
\email[Bolognesi]{michele.bolognesi@univ-rennes1.fr}

\author[Brivio]{Sonia Brivio}
\address{Dipartimento di Matematica "F.Casorati"\\Università di Pavia\\Via Ferrata 1\\
27100\\ Italy}
\email[Brivio]{sonia.brivio@unipv.it}

\subjclass[2000]{Primary:14H60; Secondary:14H10}

\begin{abstract}
Let $C$ be an algebraic smooth complex curve of genus $g>1$. The object of this paper is the study of the birational structure of  
certain moduli spaces of vector bundles and of coherent systems on $C$ and the comparison of different type of notions of stability arising in moduli theory. Notably we show that in certain cases these moduli spaces are birationally equivalent to fibrations over simple projective varieties, whose fibers are GIT quotients $(\PP^{r-1})^{rg}//PGL(r)$, where $r$ is the rank of the considered vector bundles. This allows us to compare different definitions of (semi-)stability (slope stability, $\alpha$-stability, GIT stability) for vector bundles, coherent systems and point sets, and derive relations between them. 
In certain cases of vector bundles of low rank when $C$ has small genus, our construction produces families of classical modular varieties contained in the Coble hypersurfaces.
\end{abstract}

\maketitle

\section{Introduction}

Let $C$ be a genus $g>1$ smooth complex algebraic curve, if $g\neq 2$ we will also assume that $C$ is non-hyperelliptic.

Let $\uu$ be the moduli space of rank $r$ semi-stable vector bundles on $C$ with degree zero determinant and let us denote as usual by $\su$ the moduli subspace given by vector bundles with trivial determinant. These moduli spaces appeared first in the second half of the last century thanks to the foundational works of 
Narashiman-Ramanan  \cite{ramanara} and Mumford-Newstead \cite{MumNews} and very often their study has gone along the study of the famous theta-map

\begin{eqnarray*}
\theta:\su & \dashrightarrow & |r\Theta|;\\
E & \mapsto & \Theta_E:=\{L\in \pic^{g-1}|h^0(C,E\otimes L)\neq 0\}.
\end{eqnarray*}

While we know quite a good deal about $\theta$ for low genera and ranks, as the rank or the genus grows our knowledge decreases dramatically, see Sect. 2 for a complete picture of known results.
The question of the rational type of these varieties is even more daunting. When rank and degree are coprime the situation is completely settled \cite{daking} but when the degree is zero (or degree and rank are not coprime) the open problems are still quite numerous. It is known that all the spaces $\su$ are unirational but the rationality is clear only for $r=2,\ g=2$, when in fact the moduli space is isomorphic to $\PP^3$, \cite{ramanara}. Some first good ideas about the birational structure for $g=2$ were developed in \cite{ang}. Then the $r=2$ case was analyzed in any genus by the first named author and A.Alzati in \cite{albol} with the help of polynomial maps classifying extensions in the spirit of \cite{ab:rk2}. In this paper we give a description for the higher rank cases via a new construction.

Our approach consists in studying the birational structure of $\uu$ and $\su$ via a study of similar moduli spaces of augmented vector bundles, notably the moduli space of coherent systems. By a coherent system on a curve we mean a vector bundle together with a linear subspace of given dimension of its space of global sections. Coherent systems come with a notion of stability that depends on a real parameter $\alpha$, that leads to a finite family of moduli spaces depending on the value of $\alpha$. Hence typically one will write $G_{\alpha}(r,d,k)$ for a moduli space of coherent systems, where $\alpha$ is the real parameter, $r$ is the rank of vector bundles, $d$ their degree and $k$ the prescribed dimension of the space of sections  (see Section 4 for details).
It turns out that for $\alpha >g(r-1)$ the moduli space $G_{\alpha}(r,rg,r)$ has a natural structure of a fibration and, moreover it is birational to $\uu$. The first main theorem of this paper is then the following. 

\begin{theorem}\label{global}
Let $C$ be a smooth complex curve of genus $g>1$, non-hyperelliptic if $g>2$, and let $\alpha >g(r-1)$. Then $G_{\alpha}(r,rg,r)$ is birational to a fibration over $C^{(rg)}$ whose fibers are GIT quotients $(\PP^{r-1})^{rg}//PGL(r)$.
\end{theorem}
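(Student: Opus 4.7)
The plan is to construct the birational equivalence by extracting from a generic coherent system $(E,V)$ its degeneracy data under the evaluation map, and recognizing the remaining parameters as an extension class.

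First I would establish the generic structure of coherent systems in this chamber. Since $\chi(E) = rg - r(g-1) = r = \dim V$, as soon as the evaluation $\mathrm{ev}\colon V\otimes \mathcal{O}_C\to E$ is injective as a map of sheaves, its cokernel $T$ is a torsion sheaf of length $rg$, giving
\[
0\to V\otimes \mathcal{O}_C \to E\to T\to 0.
\]
The $\alpha$-stability condition for $\alpha > g(r-1)$ forces $k'/r' < k/r = 1$ for every proper subsystem, which rules out $V$ factoring through a proper subbundle; hence on a dense open $U\subset G_{\alpha}(r,rg,r)$ the above sequence exists with $T$ of length exactly $rg$. Sending $(E,V)$ to the Fitting support of $T$ then defines a rational map $\Phi\colon G_{\alpha}(r,rg,r)\dashrightarrow C^{(rg)}$, which is clearly dominant.

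Next I would describe the fiber of $\Phi$ over a generic $D = p_1+\dots+p_{rg}$ with distinct points. Here $T_D = \bigoplus_i \mathbb{C}_{p_i}$, and the coherent systems mapping to $D$ are (up to isomorphism) the locally free extensions classified by
\[
\mathrm{Ext}^{1}(T_D, V\otimes \mathcal{O}_C) \;=\; \bigoplus_{i=1}^{rg}\mathrm{Ext}^{1}(\mathbb{C}_{p_i},\mathcal{O}_C)\otimes V \;\cong\; V^{\oplus rg},
\]
the last isomorphism depending on trivializations of the one-dimensional spaces $\mathrm{Ext}^{1}(\mathbb{C}_{p_i},\mathcal{O}_C)$. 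The local-freeness of $E$ requires each of the $rg$ components to be nonzero, giving a point of $(V\setminus 0)^{rg}$. Automorphisms of the pair $(E,V)$ come from $\mathrm{Aut}(V\otimes \mathcal{O}_C) = GL(V)$ together with the rescalings $\mathbb{G}_{\mathrm{m}}^{rg}$ acting on the trivializations of the extension data. Quotienting first by the torus projectivizes each factor to $\mathbb{P}(V)\cong \mathbb{P}^{r-1}$, and then the residual $GL(V)/\mathbb{G}_{\mathrm{m}} = PGL(r)$ acts diagonally. Interpreted moduli-theoretically, the generic fiber is therefore the GIT quotient $(\mathbb{P}^{r-1})^{rg}/\!/PGL(r)$.

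Finally I would assemble the two pieces: $\Phi$ and the fibrewise description yield a birational map from $G_{\alpha}(r,rg,r)$ to the claimed fibration, valid on the open locus where $D$ is reduced and the extension data yield a stable coherent system. The main obstacle is matching stability notions: one must check that, on a dense open of the base, $\alpha$-stability of $(E,V)$ corresponds to GIT stability of the associated $rg$-tuple of points in $\mathbb{P}^{r-1}$, so that the set-theoretic bijection of fibers really promotes to a birational equivalence of moduli spaces rather than only of coarse parameter sets. The non-hyperelliptic hypothesis enters at this step to guarantee that the generic $E$ has $h^{0}(E) = r$ with no unexpected subbundles picking up too many sections, so that the large-$\alpha$ stability comparison can be run uniformly.
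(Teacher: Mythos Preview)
Your outline matches the paper's proof closely: the paper defines the \emph{fundamental map} $\Phi$ via the degeneracy divisor of the evaluation map, dualizes to $0\to F^{*}\to V^{*}\otimes\cO_C\to\cO_B\to 0$, and identifies the fiber over a reduced $B$ with tuples in $(\PP(V))^{rg}$ modulo the diagonal $PGL(r)$ action---this is exactly your Ext parametrization, read in dual form.

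The stability comparison you flag as ``the main obstacle'' is carried out in the paper by two short direct arguments, and it is worth knowing how painless they are. Given a subset $\{v_1,\dots,v_d\}$ of the configuration spanning a subspace $W\subset V$ of dimension $s$, one obtains a coherent subsystem $(G,W)$ of type $(s,d,s)$; $\alpha$-semistability of $(F,V)$ then reads $d/s+\alpha\le g+\alpha$, i.e.\ $s\ge d/g$, which is precisely GIT semistability of the point set. Conversely, if a GIT semistable $v$ gave an $\alpha$-destabilizing subsystem $(G,W)$ for some $\alpha>g(r-1)$, then since $(F_v,V)$ is generically generated one has $k'/s\le 1$; the case $k'/s<1$ is ruled out by the bound $\alpha>g(r-1)$ and the case $k'=s$ forces $d/s>g$, contradicting GIT semistability via the same span argument. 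So the correspondence is exact on the whole fiber over a reduced $B$, not merely generically.

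One correction: your attribution of the non-hyperelliptic hypothesis is off. It is not used anywhere in this argument---neither in the stability comparison above, nor to control $h^0(E)$ (the birational comparison with $\urg$ is handled by an elementary deformation from $\bigoplus L_i$, valid for any curve of genus $g>1$). The hypothesis is a standing assumption of the paper tied to the theta-map applications in the later sections, not to the fibration structure of $G_L(r,rg,r)$.
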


Of course, since if $\alpha >g(r-1)$, $G_{\alpha}(r,rg,r)$ is birationally equivalent to $\uu$, a corresponding result holds also for $\uu$.
Notably, if we consider the moduli subspace $\su\subset\uu$ of vector bundles with fixed trivial determinant we get the following.

\begin{theorem}\label{globalsu}
The moduli space $\su$ is birational to a fibration over $\PP^{(r-1)g}$ whose fibers are GIT quotients $(\PP^{r-1})^{rg}//PGL(r)$.
\end{theorem}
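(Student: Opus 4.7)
The plan is to deduce Theorem~\ref{globalsu} from Theorem~\ref{global} by identifying $\su$, through the birational equivalence $\uu\dashrightarrow G_{\alpha}(r,rg,r)$, with the restriction of the fibration $G_{\alpha}(r,rg,r)\dashrightarrow C^{(rg)}$ over a single Abel--Jacobi fiber of the map $C^{(rg)}\to \pic^{rg}(C)$.

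First, I would use the construction of the birational map $\uu\dashrightarrow G_{\alpha}(r,rg,r)$: fix a line bundle $L\in\pic^{g}(C)$ and send a general $E\in\uu$ to the coherent system $(E\otimes L,\,H^{0}(C,E\otimes L))$. By Riemann--Roch, for a general $E$ the twist $E\otimes L$ has degree $rg$ and satisfies $h^{0}(E\otimes L)=r$, $h^{1}(E\otimes L)=0$, producing a well-defined coherent system of type $(r,rg,r)$. Under this equivalence the condition $E\in\su$ (i.e.\ $\det E\cong\cO_{C}$) translates into $\det(E\otimes L)\cong L^{\otimes r}$, so $\su$ corresponds birationally to the fiber over the single point $[L^{\otimes r}]\in\pic^{rg}(C)$ of the determinant map on $G_{\alpha}(r,rg,r)$.

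Second, I would identify this determinant map with the composition of the fibration of Theorem~\ref{global} and the Abel--Jacobi morphism. A general $(F,V)\in G_{\alpha}(r,rg,r)$ is globally generated, and the evaluation morphism fits into an exact sequence
\[
0\longrightarrow V\otimes\cO_{C}\longrightarrow F\longrightarrow T_{D}\longrightarrow 0,
\]
with $T_{D}$ a torsion sheaf of length $rg$ supported on an effective divisor $D\in C^{(rg)}$. The fibration of Theorem~\ref{global} sends $(F,V)\mapsto D$, and this sequence forces $\det F\cong\cO_{C}(D)$. Therefore the determinant map factors as $G_{\alpha}(r,rg,r)\dashrightarrow C^{(rg)}\to\pic^{rg}(C)$, and $\su$ becomes birational to the preimage in $G_{\alpha}(r,rg,r)$ of the complete linear system $|L^{\otimes r}|\subset C^{(rg)}$.

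Third, a Riemann--Roch computation finishes the picture: since $\deg L^{\otimes r}=rg>2g-2$, one has $h^{1}(L^{\otimes r})=0$ and $h^{0}(L^{\otimes r})=rg-g+1$, hence $|L^{\otimes r}|\cong\PP^{(r-1)g}$. Restricting the fibration of Theorem~\ref{global} to this projective subvariety produces birationally a fibration $\su\dashrightarrow\PP^{(r-1)g}$ whose generic fiber is still $(\PP^{r-1})^{rg}//PGL(r)$. A dimension check confirms the output: $(r-1)g+(r-1)rg-(r^{2}-1)=(r^{2}-1)(g-1)=\dim\su$.

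The hard part will be verifying that the restricted map is genuinely a fibration with the same generic fiber: one has to ensure that the rational fibration of Theorem~\ref{global} is defined and dominant when restricted to $|L^{\otimes r}|\subset C^{(rg)}$, equivalently that a generic divisor $D\in|L^{\otimes r}|$ is realized as the zero locus of a generated coherent system built in the proof of Theorem~\ref{global}. This should follow from the explicit nature of that construction, provided $L$ is chosen so that a general member of $|L^{\otimes r}|$ is a reduced simple divisor on $C$.
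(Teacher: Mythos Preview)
Your proposal is correct and follows essentially the same route as the paper: twist by a degree $g$ line bundle to pass from $\su$ to $\mathcal{SU}_C(r,\cO_C(rD))\subset\urg$, use the birational map $b$ to $\bar G_L(r,rg,r)$, observe that the fundamental divisor of $(F,V)$ lies in $|\det F|$ so that fixing the determinant restricts the base $C^{(rg)}$ to the linear system $|\cO_C(rD)|\cong\PP^{(r-1)g}$, and invoke the fiber description already obtained for $\Phi$. The paper packages the restriction step into the right-hand commutative square of Theorem~\ref{thetabirat} (factoring through the theta map and the pull-back $a^*$), but the content is exactly what you wrote; your final concern about generic reducedness of divisors in $|L^{\otimes r}|$ is legitimate and is implicitly handled there by working over $C^{(rg)}\setminus\Delta$, which meets $|L^{\otimes r}|$ in a dense open.
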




Theorem  \ref{globalsu} allows us to give a more precise explicit description of the projective geometry of the fibration of $\su$ in the case $r=3,g=2$. In fact $\mathcal{SU}_C(3)$ is a double covering of $\PP^8$ branched along a hypersurface of degree six $\cC_6$ called the \textit{Coble-Dolgachev sextic} \cite{ivolocale}. Our result is the following.

\begin{theorem}
The Coble-Dolgachev sextic $\cC_6$ is birational to a fibration over $\PP^4$ whose fibers are Igusa quartics. More precisely, $\cC_6$ contains a 4-dimensional family of Igusa quartics parametrized by an open subset of $\PP^4$.
\end{theorem}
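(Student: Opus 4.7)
The plan is to specialise Theorem~\ref{globalsu} to $(r,g)=(3,2)$ and combine it with the geometry of the theta map and the classical geometry of $6$-point configurations in the plane. Specialised, Theorem~\ref{globalsu} provides a birational fibration $p\colon \mathcal{SU}_C(3)\dashrightarrow \PP^4$ whose generic fibre is the fourfold $\cM := (\PP^2)^6 /\!/ \PGL(3)$. On the other hand, $\mathcal{SU}_C(3)$ is a generically $2{:}1$ cover of $|3\Theta|=\PP^8$ via the theta map $\theta$, branched along $\cC_6$, so the ramification divisor $R\subset \mathcal{SU}_C(3)$ of $\theta$ is birational to the Coble--Dolgachev sextic. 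The goal is to describe $R$ fibrewise with respect to $p$.

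The classical input I would use is Coble's theorem on self-associated $6$-point sets in $\PP^2$ (see Dolgachev--Ortland, \emph{Point sets in projective spaces and theta functions}): the fourfold $\cM$ carries a natural \emph{association involution} $\sigma$, and the quotient $\cM \to \cM/\langle\sigma\rangle\cong\PP^4$ is a double cover whose ramification is the Igusa quartic $\cI_4$ (the locus of configurations of $6$ points lying on a conic). Thus $\cM$ is already, birationally, a double cover of $\PP^4$ branched along an Igusa quartic.

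The core step is the compatibility of the two $2{:}1$ structures on a generic fibre of $p$. Concretely, I would show that $\theta|_{F_s}$, for generic $s\in\PP^4$, has image a linear subspace $\Lambda_s\cong\PP^4\subset\PP^8$ and coincides birationally with the Coble double cover of $\cM\cong F_s$; equivalently, that the sheet-exchange involution of $\theta|_{F_s}$ coincides with the association involution $\sigma$. Both involutions can be described explicitly in terms of the coherent-system data of Theorem~\ref{global}: the $\theta$-involution corresponds to Serre duality on the underlying bundle, while $\sigma$ corresponds to Gale duality on the associated configuration of $6$ points on $C$. Matching these two operations through the reconstruction of a rank-$3$ bundle from its evaluation at $6$ points is the technical heart of the proof.

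Granting the compatibility, the ramification $R\cong\cC_6$ meets a generic fibre $F_s$ of $p$ in precisely the Igusa quartic $\cI_4\subset\Lambda_s$. Consequently $\cC_6$ inherits via $p$ a birational fibration structure over $\PP^4$ whose fibres are Igusa quartics, and thus contains a $4$-dimensional family of Igusa quartics, as claimed. The main obstacle is the compatibility step: verifying that the Serre-duality involution defining $\theta$ and the Coble association involution on $\cM$ are birationally the same involution is not formal from Theorem~\ref{globalsu}, and will most likely require an explicit coordinate computation in an open chart of the coherent-system moduli where both involutions can be simultaneously written down.
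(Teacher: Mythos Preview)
Your overall picture is right and matches the paper's: the fibration $\mathcal{SU}_C(3)\dashrightarrow\PP^4$ with generic fibre $(\PP^2)^6/\!/\PGL(3)$, the association double cover of that fibre branched along $\cI_4$, and the theta double cover branched along $\cC_6$ are exactly the ingredients the paper uses. The difference is in how the ``compatibility step'' is handled.

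You propose to prove directly that the sheet-exchange involution of $\theta$ restricts on each fibre to the association involution, by an explicit computation. The paper avoids this entirely. It first shows (Proposition~\ref{pitre}) that the fibration map $\Phi_{K_C}$ factors as $\pi_e\circ\theta_{K_C}$, where $\pi_e\colon|3\Theta_{K_C}|\dashrightarrow|3K_C|$ is a \emph{linear} projection whose centre $L_e\simeq\PP^3$ is the locus $\{[E\oplus K_C]:E\in\mathcal{SU}_C(2,2K_C)\}$, and that $L_e$ lies in $\mathrm{Sing}(\cC_6)$, hence sits in $\cC_6$ with multiplicity two. This already forces the residual intersection $\cC_6\cap\PP^4_G$ to be a quartic threefold for every $G\in|3K_C|$, and gives your claim that $\theta(F_s)$ is a linear $\PP^4$ for free. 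To identify these quartics, the paper does \emph{not} compare involutions: it quotes that one particular fibre (over the $\lambda$-invariant eigenspace $\PP^4_e$) is an Igusa quartic, then blows up along $L_e$ to obtain a flat family of quartic threefolds over $|3K_C|$, and invokes the rigidity of $\cI_4$ (its ideal of singularities is generated by the polar cubics, so it has no infinitesimal deformations) to conclude that the generic member is an Igusa quartic. The coincidence of the theta involution with association is then stated as a \emph{corollary}, not used as an input.

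So your route is a genuine alternative, but the step you flag as the main obstacle is precisely what the paper's rigidity argument is designed to bypass. One small correction: the covering involution of $\theta$ in genus~$2$ is $F\mapsto h^*F^*$ with $h$ the hyperelliptic involution, not pure Serre duality; if you do pursue the direct comparison with Gale duality, this is the involution you must match.
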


We recall that an Igusa quartic is a modular quartic hypersurface in $\PP^4$ that is related to some classical GIT quotients (see e.g. \cite{do:pstf}) and moduli spaces. Its dual variety is a cubic 3-fold called the \textit{Segre cubic}, that is isomorphic to the GIT quotient $(\PP^1)^6//PGL(2)$. 

\smallskip

If $r=2$ and $g=3$, then $\cS\cU_C(2)$ is embedded by $\theta$ in $\PP^7$ as a remarkable quartic hypersurface $\cC_4$ called the \textit{Coble quartic} \cite{ramanara}. Our methods also allow us to give a quick proof of the following fact.

\begin{proposition}
The Coble quartic $\cC_4$ is birational to a fibration over $\PP^3$ whose fibers are Segre cubics. There exists a 3-dimensional family of Segre cubics contained in $\cC_4$.
\end{proposition}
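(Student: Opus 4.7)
The plan is to deduce the proposition from Theorem~\ref{globalsu} specialized to $r=2,\ g=3$ together with two classical inputs: the Narasimhan--Ramanan theorem \cite{ramanara} identifying $\mathcal{SU}_{C}(2)$ with the Coble quartic $\cC_{4} \subset \PP^{7}$ via the theta map, and the classical fact (see e.g.\ \cite{do:pstf}) that the symmetric GIT quotient $(\PP^{1})^{6}//\PGL(2)$ is isomorphic to the Segre cubic primal $S_{3} \subset \PP^{4}$. Since $(r-1)g = 3$ in this setting, Theorem~\ref{globalsu} immediately produces a birational fibration $\mathcal{SU}_{C}(2) \dashrightarrow \PP^{3}$ whose general fibers are $(\PP^{1})^{6}//\PGL(2) \cong S_{3}$; conjugating by $\theta$ yields the birational fibration of $\cC_{4}$ claimed in the first sentence of the proposition.

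To promote this to an actual three-dimensional family of Segre cubics sitting inside $\cC_{4}$, I would analyze the restriction of $\theta$ to a general fiber of the coherent-systems construction built in the proof of Theorem~\ref{global}. A general point of $\PP^{3}$ corresponds (via determinant and Abel--Jacobi) to a line bundle $M \in \pic^{6}(C)$, and the fiber parametrizes rank-$2$ bundles $E$ together with presentations of the form $0 \to \cO_{C}^{2} \to E \otimes L \to T \to 0$, where $L$ is an auxiliary line bundle with $L^{2} \cong M$ and $T$ is a length-$6$ torsion sheaf, modulo the $\PGL(2)$-action on $\cO_{C}^{2}$. The bundles in a single fiber all sit in extensions with a common trivial sub-bundle, which I expect to force the associated theta divisors $\Theta_{E}$ to contain a common divisorial component. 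This common component should cause the restriction of $\theta$ to factor through a fixed linear subspace $\PP^{4} \subset \PP^{7}$; combined with the classical identification of the fiber with $S_{3}$, the image must be a Segre cubic lying in $\cC_{4}$, and letting the base parameter vary over a Zariski open in $\PP^{3}$ yields the desired three-dimensional family.

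The hard part is exactly this last step: verifying that $\theta$ restricted to a general fiber factors through a linear $\PP^{4} \subset \PP^{7}$ and is birational onto a Segre cubic. I would attack the factorization by an explicit analysis of the common base component of $\Theta_{E}$ as $E$ varies in a fixed fiber, using the trivial sub-bundle in the extension presentation to produce a common section of $\cO(2\Theta)$ dividing every $\theta$-image, hence a $5$-dimensional linear subsystem of $H^{0}(\pic^{2}(C), \cO(2\Theta)) \cong \CC^{8}$. Once the factorization through $\PP^{4}$ is established, a degree count forces the image to be the Segre cubic: the residual intersection $\cC_{4} \cap \PP^{4}$ is a quartic in $\PP^{4}$ which must decompose as the Segre cubic plus an extra hyperplane (matching $3 + 1 = 4$), and the birational fibration coming from Theorem~\ref{globalsu} singles out the cubic component.
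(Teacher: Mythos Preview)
Your argument for the first sentence is correct and matches the paper: both invoke Theorem~\ref{globalsu} (equivalently the fundamental map) together with the classical identification $(\PP^1)^6//\PGL(2)\cong S_3$.

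For the second sentence there are two genuine gaps.

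First, the mechanism you propose for showing that $\theta$ carries a general fiber into a fixed $\PP^4\subset\PP^7$ does not work. Having $\cO_C^2\hookrightarrow E\otimes L$ for one specific $L$ says nothing about the behaviour of $h^0(E\otimes M)$ as $M$ ranges over $\pic^{g-1}(C)$, so it does not produce a common component of the $\Theta_E$; and even if it did, the locus of divisors in $|2\Theta|$ containing a fixed component does not have the right dimension to cut out a $\PP^4$. The paper bypasses this entirely via Theorem~\ref{thetabirat}: the map $\Phi_D$ equals $\pi_e\circ\theta_D$, where $\pi_e:|2\Theta_D|\dashrightarrow|2D|$ is the linear projection with center $L_e=\{M:a(C)\subset M\}$. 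Since $\theta_D$ is an embedding, the closures of the fibers of $\Phi_D$ are precisely the intersections of $\cC_4$ with the $\PP^4$'s through $L_e$. The paper identifies $L_e$ concretely as the $\PP^3_c\cong|3K-2D|^*$ of extensions $0\to\cO_C(D-K)\to E\to\cO_C(K-D)\to 0$, checks that $\PP^3_c\subset\cC_4$ with multiplicity one, and concludes that the residual intersection in each $\PP^4_B$ is a cubic threefold.

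Second, your identification of that cubic with the Segre cubic is incomplete. Birationality with $(\PP^1)^6//\PGL(2)$ only tells you the cubic is rational, which puts no constraint on its projective type. The paper argues instead by counting nodes: for reduced $B\in|2D|$, the strictly semi-stable bundles in the fiber correspond to the ten partitions of the six points of $B$ into two complementary triples, giving ten singular points on the cubic. A cubic threefold with ten isolated double points attains the Varchenko bound and is therefore projectively the Segre cubic.
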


We underline that the cases of $\cC_4$ and $\cC_6$ are particularly interesting because one can interpret explicitly the beautiful projective geometry of the Igusa quartic and the Segre cubic in terms of vector bundles on $C$ (see Sect. 6). We hope that these results could help to shed some new light on the question of rationality of $\su$ and on the properties of the theta map.

\smallskip

On the other hand, a side result of Thm. \ref{global} and of its proof is that we get a bijection between the general vector bundle (or coherent system) and a set of $rg$ points in $\PP^{r-1}$. It then makes perfect sense to compare the GIT stability of a set of points in $(\PP^{r-1})^{rg}//PGL(r)$ with the slope stability of vector bundles and the $\alpha$-stability of coherent systems. This is discussed in Section 6.

\smallskip

A little caution: in some points of the paper it is important to distinguish a vector bundle $E$ from its S-equivalence class $[E]$. We have tried to keep the two distinguished notations when it is necessary, using just the vector bundle one when it has no importance, even if this sometimes may offend the good taste of the reader.

\smallskip

\textit{Acknowledgments:} The inspiration of this work came from a conversation that the first named author had with Norbert Hoffmann. This work also benefitted from remarks and discussions with Christian Pauly, Alessandro Verra and Angela Ortega. Arnaud Beauville brought to our attention some existing literature. Quang Minh and Cristian Anghel kindly let us have copies of their papers. Finally Duco Van Straten and Edoardo Sernesi gave us good advice about the deformation theory of singular projective hypersurfaces.

\medskip

\textbf{Description of contents.}

In section 2 we collect a few results on the theta map. In section 3 we outline the relation between theta maps and theta-linear systems by introducing the theta divisor of a vector bundle with integral slope. 
In section 4 we introduce generically generated coherent systems and their moduli spaces plus some properties of the evaluation and the determinant map for a coherent system of any rank. In Section 5 we introduce the \textit{fundamental divisor} of a coherent system. This definition allows us to define the \textit{fundamental map} in Section 6, the fibers of this map give us the fibration we look for. Then we prove Theorem \ref{global} and discuss briefly the relation between $\alpha-$stability of coherent systems, slope stability of vector bundles and GIT stability of point sets in the projective space. Finally, in Section 7 we restrict our analysis to moduli of vector bundles. Notably we apply our results to the cases $g=2,\ r=3$ and $g=3,\ r=2$ and construct explicit families of invariant hypersurfaces contained in the moduli spaces. 

\section{The Theta map.}\label{thetas}

Let $C$ be a smooth complex  algebraic curve of genus $g \geq 2$, we assume  that it is  non-hyperelliptic if $g>2$. 
Let $\pic^d(C)$ be the Picard variety  parametrizing  line bundles of  degree $d$  on $C$,  $\pic^0(C)$ will often be  denoted by $J(C)$. Let  $\Theta \subset \pic^{g-1}(C)$ be the canonical theta divisor   

$$\Theta:=\{L\in \pic^{g-1}(C)| \ h^0(C,L)\neq 0 \}.$$

For $r \geq 2$, let $\su$ denote the coarse moduli space of semi-stable vector bundles of rank $r$ and trivial determinant on $C$. It is a normal, projective variety  of dimension $(r^2-1)(g-1)$. 
It is well known that $\su$ is locally factorial and that $\pic (\su) = \ZZ$ \cite{dn:pfv}, generated by a line bundle
$\cL$ called the \textit{determinant bundle}. On the other hand, for $E\in \su$ we define

$$\Theta_E:=\{L\in \pic^{g-1}(C) | \ \ \ h^0(C,E\otimes L)\neq 0 \ \ \}.$$
\noindent This is either a divisor in the linear system $|r\Theta|$ or the whole $\pic^{g-1}(C)$. For $E$ a general  bundle  $\Theta_E$ is a divisor, {\it the theta divisor} of $E$.  This means that one can define the rational {\it theta map} of $\su$:

\begin{eqnarray}
\theta: \su  \dashrightarrow  |r\Theta|
\end{eqnarray}

\noindent sending $E $ to its theta divisor  $\Theta_E$.
The relation between the theta map and the determinant bundle  is given by the following fundamental result:

\begin{theorem}\cite{bnr}
There is a canonical isomorphism $|r\Theta| \stackrel{\sim}{\lra} |\cL|^*$ which  identifies $\theta$ with the rational map ${\varphi}_{\cL} \colon 
\su \dashrightarrow |\cL|^*$ associated to the determinant line bundle. 
\end{theorem}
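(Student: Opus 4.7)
The plan is to produce the canonical isomorphism by exhibiting the universal theta divisor on $\su \times \pic^{g-1}(C)$ and identifying its associated line bundle via the see-saw principle. Working on an \'etale cover of $\su$ (or at the level of moduli stacks) where a universal bundle $\cE$ exists, let $\cP$ denote the Poincar\'e bundle on $C \times \pic^{g-1}(C)$. For any pair $(E,L)$ one has $\chi(E \otimes L) = 0$, so locally on $\su \times \pic^{g-1}(C)$ the complex $R\pi_{*}(\cE \otimes \cP)$ is represented by a two-term complex $K^{0} \to K^{1}$ of vector bundles of equal rank; the vanishing of its determinant cuts out the effective Cartier divisor
$$\widetilde{\Theta} = \{(E,L) : h^{0}(E \otimes L) \neq 0\} \subset \su \times \pic^{g-1}(C),$$
which is non-trivial since $\Theta_E$ is a proper divisor for generic $E$.

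Next, the see-saw principle applies to $\cO(\widetilde{\Theta})$: restricting to $\{E\} \times \pic^{g-1}(C)$ for general $E$ gives $\cO(\Theta_E) \cong \cO(r\Theta)$ by the very definition of $\theta$, while restricting to $\su \times \{L\}$ yields a line bundle on $\su$ which, by Drezet-Narasimhan ($\pic \su = \ZZ \cL$), must be $\cL^{\otimes m}$ for some integer $m$; an explicit degree computation on a test family inside $\su$ pins $m$ down to $1$. Thus
$$\cO(\widetilde{\Theta}) \cong \cL \boxtimes \cO_{\pic^{g-1}}(r\Theta).$$

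Applying the K\"unneth formula, the tautological section defining $\widetilde{\Theta}$ becomes a canonical element $s \in H^{0}(\su, \cL) \otimes H^{0}(\pic^{g-1}, \cO(r\Theta))$ and hence yields a canonical linear map
$$\Phi \colon H^{0}(\su, \cL)^{*} \lra H^{0}(\pic^{g-1}, \cO(r\Theta)).$$
Unwinding the construction, the image $\Phi(\operatorname{ev}_E)$ is, up to nonzero scalar, precisely the equation of $\Theta_E$, so once $\Phi$ is proved to be an isomorphism its projectivization $|\cL|^{*} \xrightarrow{\sim} |r\Theta|$ automatically intertwines $\varphi_{\cL}$ with $\theta$, and its inverse is the isomorphism claimed in the statement.

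The main obstacle is the last step, establishing that $\Phi$ is an isomorphism. Injectivity of $\Phi$ is equivalent to the non-degeneracy statement that $\{\Theta_E : E \in \su\}$ is contained in no hyperplane of $|r\Theta|$; one proves this by specializing to polystable bundles $E = L_{1} \oplus \cdots \oplus L_{r}$ with $\prod L_{i} = \cO_{C}$, for which $\Theta_E = \sum \Theta_{L_{i}}$, and then invoking the classical fact that translates of the principal theta divisor span $|r\Theta|$ on $J(C)$. Surjectivity then follows from the equality of dimensions $h^{0}(\su, \cL) = r^{g} = h^{0}(\pic^{g-1}, \cO(r\Theta))$; the second equality is classical on abelian varieties while the first is the level-one Verlinde number for $\mathrm{SL}_{r}$, which in this low-level case can either be invoked as an independent input or derived from the same Hecke-correspondence argument of Beauville-Narasimhan-Ramanan.
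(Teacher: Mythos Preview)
The paper does not supply a proof of this statement: it is quoted verbatim as a result of Beauville--Narasimhan--Ramanan \cite{bnr}, with no argument given. So there is no ``paper's own proof'' to compare against; what you have written is essentially a sketch of the standard argument (which is indeed the BNR approach).

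Your outline is correct in its architecture: construct the universal $\widetilde{\Theta}$, identify its line bundle via see-saw, extract the linear map from the K\"unneth tensor, and check bijectivity. Two small points are worth tightening. First, the step ``an explicit degree computation on a test family pins $m$ down to $1$'' is either trivial or circular depending on conventions: in \cite{dn:pfv} the determinant bundle $\cL$ is \emph{defined} as the restriction of $\cO(\widetilde{\Theta})$ to a slice $\su \times \{L\}$, so there is nothing to compute. Second, be cautious about circularity when invoking $h^{0}(\su,\cL)=r^{g}$: in much of the literature this equality is itself deduced from the isomorphism you are proving (together with the classical $h^{0}(r\Theta)=r^{g}$). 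The cleanest self-contained route is the one you allude to at the end, namely the strange-duality/Hecke argument of \cite{bnr}, which gives the dimension directly; but then you should state that as the actual proof of surjectivity rather than as an optional alternative.
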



The  cases when $\theta$ is a morphism or finite are of course very appealing. 
Notably,  $\theta$ is an embedding for $r=2$ \cite {ramanara}, \cite{BVtheta},\cite{vgiz} and it is a morphism when $r=3$ for $g \leq 3$ and  for a general curve of genus $g > 3$, \cite{bove23}, \cite{raysect}. Finally, $\theta$ is generically finite for $g=2$  \cite{bove23}, \cite{briverraBN} and we know its  degree  for $r \leq 4$,  \cite{ivolocale}, \cite{deg16}. 
There are also good descriptions of the image of $\theta$ for $r=2 \quad g=2,3$ \cite{rana:cra} \cite{paulydual},  $r=3,\ g=2$ \cite{orte:cob} \cite{Quang},  $r=2,\ g=4$ \cite{oxpau:heis}.  
Moreover, it has recently been shown in \cite{briverraPL} that if $C$ is general and $ g >> r$ then $\theta$ is generically injective.

\section{Vector bundles and theta linear systems}\label{thetasystems}

The notion of theta divisor can be extended to  vector bundles  with integral slope, in this paper we will consider 
 bundles with slope  $g = g(C)$.  
Let $\urg$ be the  moduli space of semi-stable  vector bundles  as above.
The tensor product defines a natural map:

\begin{eqnarray*}
t \colon  \su \times \pic^{g}(C) & \ra & \urg;\\
(E, \cO_C(D)) & \mapsto  &  E \otimes {\cO}_C(D), 
\end{eqnarray*}

\noindent which is étale, Galois, with Galois group $J(C)[r]$, the group of $r$-torsion points of the Jacobian of $C$. 

   
Moreover, if one restricts $t$ to $\su \times \cO_C(D)$ this yields an isomorphism $t_{D}  \colon \su \to  \mathcal{SU}_C(r,\cO_C(rD)),$ where the latter is the moduli space of rank $r$ semi-stable vector bundles with determinant  $\cO_C(rD)$.

\begin{definition}\label{supertheta}
Let $F \in \urg$, then we define the {\it theta divisor } of $F$ as follows:

\begin{equation*}
\Theta_F:=\{L \in \pic^1(C) | \ h^0(C,F \otimes L^{-1} )\neq 0\}.
\end{equation*} 

\end{definition}

Let $E \in \su$ and $\cO_C(D) \in \pic^g(C)$. If $F = E \otimes \cO_C(D)$, then we have that $\Theta_F = \cO_C(D) - {\Theta}_E$, thus $\Theta_F$ is a divisor if and only  ${\Theta}_E$ is a divisor.  We define 

\begin{equation} 
{\Theta}_D \colon = \{ L \in \pic^1(C) \vert \ \ h^0(\cO_C(D) \otimes L^{-1}) \geq 1 \ \}.
\end{equation}

Then, for any $r \geq 1$,  we have a natural isomorphism $\sigma_D:|r\Theta| \ra |r\Theta_D|$ given by the translation
$M  \mapsto \cO_C(D) - M$; moreover, if $\cO_C(rD_1) \simeq \cO_C(rD_2)$, then $|r\Theta_{D_1}| = |r\Theta_{D_2}|$.  
So we conclude that if $F  \in \urg$ admits a theta divisor, then ${\Theta}_F  \in \vert r {\Theta}_D \vert$, for any line bundle $\cO_C(D) \in  \pic^g(C)$ which is a $r^{\mathrm{th}}$-root of $\det F$. 
In this way we obtain a family of theta linear systems over the Picard variety $\pic^{rg}(C)$, as the following shows.




\begin{lemma} 
There exists a projective bundle ${\mathcal T}$  over $\pic^{rg}(C)$:
$$ p \colon {\mathcal T} \to \pic^{rg}(C), $$
whose fiber over $\cO_C(M)\in \pic^{rg}(C)$  is  the linear system $ \vert r {\Theta}_D \vert$, 
where $\cO_C(D)\in \pic^{g}(C) $ is any $r^{\mathrm{th}}$-root of $\cO_C(M)$.

\end{lemma}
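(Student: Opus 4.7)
The plan is to realize $\cT$ as the descent of a projective bundle along the \'etale Galois cover
\[
\mu_r \colon \pic^g(C) \longrightarrow \pic^{rg}(C), \qquad \cO_C(D) \longmapsto \cO_C(rD),
\]
whose Galois group is $J(C)[r]$, acting freely on $\pic^g(C)$ by translation. The substantive input is then a $J(C)[r]$-equivariant family of the linear systems $|r\Theta_D|$ on $\pic^g(C)$.

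First I would assemble the divisors $\Theta_D$ into a single universal family. Pulling back the canonical theta divisor $\Theta\subset\pic^{g-1}(C)$ along the difference morphism
\[
\mu\colon\pic^g(C)\times\pic^1(C)\longrightarrow\pic^{g-1}(C),\qquad (\cO_C(D),L)\longmapsto \cO_C(D)\otimes L^{-1},
\]
yields a divisor $\Theta_{\mathrm{univ}}:=\mu^{-1}(\Theta)$ whose restriction to each slice $\{\cO_C(D)\}\times\pic^1(C)$ is precisely $\Theta_D$. Set $\cL := \cO(r\Theta_{\mathrm{univ}})$. Since $\cO(r\Theta_D)$ is the $r$-th tensor power of a principal polarization on the $g$-dimensional abelian variety $\pic^1(C)$, $h^0(\pic^1(C),\cO(r\Theta_D))=r^g$ is independent of $D$, and cohomology and base change show that $\cV := (p_1)_*\cL$ is locally free of rank $r^g$ on $\pic^g(C)$ with fiber canonically $H^0(\pic^1(C),\cO(r\Theta_D))$; hence the fiber of $\PP(\cV)$ over $\cO_C(D)$ is the linear system $|r\Theta_D|$.

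To descend $\PP(\cV)$ along $\mu_r$, I would observe that translation by $\eta\in J(C)[r]$ in the first factor pulls $\cL$ back to a line bundle whose fiberwise isomorphism class is $t_\eta^*\cO(r\Theta_D)$. Because $J(C)[r]$ is exactly the kernel of the polarization $\phi_{r\Theta_D}$, this pull-back is globally isomorphic to $\cL$, up to an overall non-vanishing scalar, which is invisible on $\PP(\cV)$. Thus $\PP(\cV)$ inherits a canonical $J(C)[r]$-equivariant structure covering the free action on the base, and the quotient
\[
p\colon \cT := \PP(\cV)/J(C)[r] \longrightarrow \pic^{rg}(C)
\]
is the desired projective bundle, its fiber over $\cO_C(M)=\cO_C(rD)$ being $|r\Theta_D|$; this is well-defined thanks to the equality $|r\Theta_{D_1}|=|r\Theta_{D_2}|$ whenever $rD_1=rD_2$ already recalled in the excerpt.

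The one delicate point is verifying that the scalar ambiguities above assemble into a trivial projective cocycle, so that $\PP(\cV)$ admits an honest $J(C)[r]$-action rather than merely a projective representation; this is automatic since the obstruction takes values in scalars and hence dies in $\PGL$, but it is the step that deserves explicit mention. Everything else -- constancy of $h^0$, cohomology and base change, and \'etale descent along $\mu_r$ -- is standard.
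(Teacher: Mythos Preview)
Your argument is correct and follows essentially the same route as the paper: both construct the pushforward of $\cO(r\,\delta^*\Theta)$ along the first projection $\pic^g(C)\times\pic^1(C)\to\pic^g(C)$, projectivize, and then descend along the $J(C)[r]$-cover $\pic^g(C)\to\pic^{rg}(C)$. The only difference is that the paper asserts $J(C)[r]$-equivariance of the vector bundle $\cF$ itself and descends that, whereas you work directly with the projective bundle and explain why the scalar ambiguities die in $\PGL$; your version is in fact slightly more careful on this point.
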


\begin{proof}
Remark first that the linear system  $|r\Theta_D|$ is well-defined since it does not depend on which $r^{\mathrm{th}}$-root $\OO(D)$ of $\OO(M)$ we choose. Let us now consider the following  map:

\begin{eqnarray*}
\delta \colon \pic^g(C) \times \pic^1(C) & \to & \pic^{g-1}(C);\\
(\cO_C(D), L) & \mapsto & \cO_C(D) \otimes L^{-1}.
\end{eqnarray*}

For any $\cO_C(D) \in \pic^g(C)$,  we have: ${\delta}^* {r\Theta}_{|\cO_C(D) \times \pic^1(C)} \simeq \cO_{\pic^1(C)} ({r\Theta}_D).$ Let $p_1 \colon  \pic^g(C) \times \pic^1(C) \to \pic^g(C)$ be the projection onto the first factor. Consider the sheaf $ \cF:={p_1}_{*} \cO_{\pic^g(C) \times \pic^1(C)}( {\delta}^*(r\Theta)).$ It is locally free and its fiber at $\cO_C(D) \in \pic^g(C)$ is canonically identified with  the following vector space 
$$H^0( \pic^1(C), \cO_{\pic^1(C)} (r{\Theta}_D)).$$
Let $\widetilde{{\mathcal T}}$ be the projective bundle $\PP (\cF)$ on $\pic^g(C)$. 

Moreover the vector bundle $\cF$ is $J[r](C)$-equivariant, hence by easy descent theory (see \cite{stacchi} Thm. 4.46) it passes to the quotient by $J[r](C)$, i.e. the image of the cover $\rho  \colon \pic^g(C) \to \pic^{rg}(C)$ given by taking the $r^{th}$ power of each $L\in \pic^g(C)$. The projectivization of the obtained bundle is the projective bundle $\cT$ we are looking for.
We denote by  $p:\cT \ra \pic^{rg}(C)$  the natural projection on the base of the projective bundle.


\end{proof}

The previous arguments allow us to define the rational {\it theta map} of $\urg$.

\begin{eqnarray}\label{theta}
\theta_{rg} \colon \urg &  \dashrightarrow  &  {\mathcal T};\\
F & \mapsto & \Theta_F.
\end{eqnarray}

Finally, let us denote by $\theta_D$ the restriction of $\theta_{rg}$ to  $\mathcal{SU}_C(r,\cO_C(rD))$. Then we have the following commutative diagram:

$$\xymatrix{ \su \ar[r]^(.4){t_{D}} \ar@{-->}[d]_{\theta} & \mathcal{SU}_C(r,\cO_C(rD))  \ar@{-->}[d]^{{\theta}_D} \\
\vert r \Theta \vert \ar[r]^{{\sigma}_{D}} & \vert r {\Theta}_{D} \vert \\}$$

since $t_{D}$ and ${\sigma}_{D}$ are isomorphisms, we can identify the two theta maps. 
Finally remark that the composed  map $p \circ \theta_{rg}$ is precisely the natural map  which associates to each vector bundle $F$ its determinant line bundle $det (F)$.

\section{Generically generated coherent systems}
A pair $(F,V)$ is a { \it  coherent  system} of type $(r,d,k)$ on the curve $C$ if $F$ is a vector bundle of rank $r$ and degree $d$ on $C$ and  $V \subseteq  H^0(F)$ is a linear subspace of dimension $k$. A coherent system $(F,V)$ is {\it generically generated} if 
the evaluation map
$$ev_{F,V} \colon V \otimes \cO_C \to F, \quad (s,x) \to s(x)$$
has torsion cokernel. 
A   proper coherent subsystem of $(F,V)$ is a pair $(G,W)$  where $G  $ is a  non-zero sub-bundle  of $F$ and   $W \subseteq  V \cap H^0(F)$, with $(G,W) \not= (F,V)$.
\hfill\par
For any real number $\alpha$,  we define the $\alpha$-slope of a coherent system  $(F,V)$ of type $(r,d,k)$ as follows:
$$ \mu_{\alpha}(F,V) =  \frac{d}{r} + \alpha \frac{k}{r}.$$

\begin{definition}\label{alfa}
 A  coherent system   $(F,V)$ is $\alpha-$\textit{stable} (resp. $\alpha-$\textit{semi-stable}) if for any  proper coherent 
 subsystem $(G,W)$ of $(F,V)$ we have:
 $$ \mu_{\alpha}(G,W) < \mu_{\alpha}(F,V) \ \ ( \text{resp.} \leq ).$$
 \end{definition}

Every $\alpha-$semi-stable coherent system  $(F,V)$ admits a Jordan-H\"{o}lder fibration:

$$ 0= (F_0,V_0)  \subset (F_1, V_1) \subset .... \subset (F_n, V_n) = (F,V),$$

\noindent with  $  \mu_{\alpha}(F_j,V_j) = \mu_{\alpha}(F,V)$, $\forall j=1,..,n$ and s.t. each  coherent quotient system

 $$(G_j,W_j) = {{(F_j,V_j)} \over {(F_{j-1},V_{j-1})}}$$

\noindent is $\alpha-$stable. This defines the graded coherent system:

 $$ \gr(F,V) = {\bigoplus}_{j=1}^{n} (G_j,W_j).$$

Finally, we say that two $\alpha-$semi-stable coherent systems  are $S-equivalent$ if and only if their graded coherent systems are isomorphic.
\hfill\par
Let  $G_{\alpha}(r,d,k)$  be the moduli space  parametrizing  $\alpha-$stable coherent systems   of type $(r,d,k)$, its compactification  \ $\bar{G}_{\alpha}(r,d,k)$ is  a  projective scheme  parametrizing S-equivalence classes of $\alpha-$semi-stable coherent systems of type $(r,d,k)$,
see \cite{kingnew} for details. For $k\geq 1$, it follows easily from the definitions that, if $\widetilde{G}_{\alpha}(r,d,k)\neq \emptyset$, then $\alpha\geq 0$ and $d\geq 0$; if $G_{\alpha}(r,d,k)\neq \emptyset$, then $\alpha>0$.

\begin{definition}\label{critic}
A positive real number $\alpha$ is said to be a { \it virtual critical value} for coherent systems of type $(r,d,k)$ if it is numerically possible for 
a system $(F,V)$ to have a proper subsystem $(G,W)$ of type $(r',d',k')$ such that $\mu_{\alpha}(F,V) = \mu_{\alpha}(G,W)$ with $\frac{k}{r} \not= \frac{k'}{r'}$. If there is a coherent system $(F,V)$ and a
subsystem $(F',V')$ such that this actually holds, we say that $\alpha$ is an \textit{actual critical value}.
\end{definition}

It is well known (\cite{BGPMN}, Sect. 2.1 and 4) that, for coherent systems  of type $(r,d,k)$, the actual critical values form a finite set:

$$ 0 = {\alpha}_0  < {\alpha}_1 <... < {\alpha}_L,$$

\noindent and that within the interval $({\alpha}_i, {\alpha}_{i+1})$ the property of $\alpha-$stability of a pair is independent of $\alpha$. This means that $G_{\alpha}(r,d,k)$ is isomorphic to $G_{\alpha'}(r,d,k)$ whenever $\alpha$ and $\alpha'$ are contained in the same open interval $(\alpha_i,\alpha_{i+1})$. The same isomorphism holds for the respective compactifications. It is customary to call $G_L(r,d,k)$ the \textit{terminal} moduli space, i.e. the one that comes within the range $(\alpha_L, +\infty)$.

\smallskip

In this paper we will consider coherent systems of type $(r,rg,r)$. The following properties hold, thanks to \cite{BGPMN}, Thms. 4.4, 4.6 and 5.6.

\begin{proposition}\label{terminal}
 Let $r \geq 2$. For   $\alpha  > g(r-1)$  the moduli spaces $G_{\alpha}(r,rg,r)$ stabilize, i.e. we have: 
$$G_{\alpha}(r,rg,r)= G_{L}(r,rg,r), \quad   {\text if} \ \ \alpha  > g(r-1).$$ 
The moduli space $G_L(r,rg,r)$  is a smooth quasi-projective variety of dimension $r^2(g-1) + 1$ and its compactification $\bar{G}_{L}(r,rg,r)$ is irreducible. Moreover, after $g(r-1)$, each  $\alpha-$semi-stable $(F,V)  $ is  generically generated.
\end{proposition}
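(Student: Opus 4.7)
The plan is to establish each of the three claims in turn.

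\textbf{Last critical value and stabilization.} A proper subsystem $(G,W)$ of $(F,V)$ of type $(r',d',k')$ with $k'/r'\neq 1$ determines the critical value
\begin{equation*}
\alpha^{\ast}\;=\;\frac{r'g-d'}{k'-r'}.
\end{equation*}
The key case is $k'>r'$ (the dual case $k'<r'$ is handled by passing to the quotient system $(F/G,V/(V\cap H^{0}(G)))$, which has type $(r-r',rg-d',r-k')$ and satisfies $k''>r''$). Here $k'>r'\ge 1$ forces $r'\le r-1$, while positivity of $\alpha^{\ast}$ forces $0\le d'<r'g$, giving $\alpha^{\ast}\le r'g\le g(r-1)$. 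The extremal value is attained at $(r',d',k')=(r-1,0,r)$, so $\alpha_{L}=g(r-1)$ and $G_{\alpha}(r,rg,r)$ is constant on the interval $(g(r-1),+\infty)$.

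\textbf{Generic generation.} Suppose $(F,V)$ is $\alpha$-semi-stable with $\alpha>g(r-1)$ but that $ev_{F,V}\colon V\otimes\cO_{C}\to F$ is not generically surjective, and let $G\subsetneq F$ be the saturation of $\im(ev_{F,V})$---a subbundle of rank $r'<r$ and degree $d'\ge 0$ (the image of the trivial bundle $V\otimes\cO_{C}$ has non-negative degree, and saturating only increases it). Then $V\subseteq H^{0}(G)$, so $(G,V)$ is a proper subsystem, and a direct computation yields
\begin{equation*}
\mu_\alpha(G,V)-\mu_\alpha(F,V)\;=\;\frac{d'-r'g}{r'}+\alpha\cdot\frac{r-r'}{r'},
\end{equation*}
which is strictly positive whenever $\alpha>(r'g-d')/(r-r')$; the supremum of the right-hand side over $r'\le r-1$ and $d'\ge 0$ is $g(r-1)$, attained at $(r',d')=(r-1,0)$. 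Hence $(F,V)$ cannot be $\alpha$-semi-stable, a contradiction.

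\textbf{Dimension, smoothness, and irreducibility.} The Brill--Noether number for type $(r,rg,r)$ is
\begin{equation*}
\beta\;=\;r^{2}(g-1)+1-k(k-d+r(g-1))\;=\;r^{2}(g-1)+1,
\end{equation*}
since $k-d+r(g-1)=r-rg+rg-r=0$. Smoothness at an $\alpha$-stable $(F,V)$ reduces to $\ext^{2}((F,V),(F,V))=0$, which I would deduce from the standard hyper-Ext spectral sequence for coherent systems combined with the generic generation already established (so that the cokernel of $ev_{F,V}$ is torsion and the relevant obstruction contributions vanish). For irreducibility of $\overline{G}_{L}$, the natural strategy is to parametrize a dense open subset via extensions
\begin{equation*}
0\;\to\;\cO_{C}^{\oplus r}\;\to\;F\;\to\;\tau\;\to\;0,
\end{equation*}
with $\tau$ a length-$rg$ torsion sheaf supported on $rg$ distinct points of $C$ and $V$ the image of $H^{0}(\cO_{C}^{\oplus r})$; such extensions form an irreducible family fibred over $C^{(rg)}$ with fibres given by the irreducible GIT quotient $(\PP^{r-1})^{rg}//\PGL(r)$. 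The main technical obstacle I expect is verifying that this parametrization actually dominates $\overline{G}_{L}$ and controlling the degenerate locus where $\tau$ acquires non-reduced support---essentially the content of the fundamental-divisor construction to be carried out in Sections~5 and 6 of the paper.
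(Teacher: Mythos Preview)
The paper does not prove this proposition at all: it simply records the statement and attributes it to \cite{BGPMN}, Theorems~4.4, 4.6 and~5.6. So your attempt is not comparable to ``the paper's proof''---you are reproducing (parts of) the BGPMN argument.

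Your generic generation argument is correct and is exactly the standard one: if the evaluation map is not generically surjective, the saturation of its image yields a subsystem $(G,V)$ of type $(r',d',r)$ with $r'<r$ and $d'\geq 0$ (the image is a quotient of the trivial bundle, hence of non-negative degree, and saturation only raises degree), and this destabilises $(F,V)$ as soon as $\alpha>g(r-1)$. Your Brill--Noether number computation is also fine.

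The stabilization argument, however, has a genuine gap. You assert that for a subsystem with $k'>r'$, positivity of $\alpha^{\ast}=(r'g-d')/(k'-r')$ forces $0\le d'$. It forces only $d'<r'g$; nothing rules out $d'$ being very negative. For instance, on a genus $2$ curve the bundle $G=\cO_{C}(K_{C})^{\oplus 2}\oplus\cO_{C}(-D')$ with $\deg D'=5$ has rank $3$, degree $-1$, and $h^{0}=4>3$, so subsystems of type $(r',d',k')$ with $k'>r'$ and $d'<0$ certainly exist. Consequently the \emph{virtual} critical values for type $(r,rg,r)$ are unbounded, and your approach of bounding them directly cannot work as written. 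The dual quotient argument has the same problem (and a further imprecision: the quotient system carries the image of $V$ in $H^{0}(F/G)$, whose dimension is $r-\dim(V\cap H^{0}(G))\leq r-k'$, not necessarily $r-k'$).

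The fix is to reverse the order of your two steps. Once generic generation is established for $\alpha$-semi-stable systems with $\alpha>g(r-1)$, the argument of Lemma~\ref{props} shows that every proper subsystem of such an $(F,V)$ has $k'\le r'$; and since the quotient $F/G$ is then also generically generated one gets $\deg(F/G)\ge 0$, i.e.\ $d'\le rg$. With these two constraints in hand the bound on \emph{actual} critical values above $g(r-1)$ goes through. This is essentially how BGPMN proceeds.

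For smoothness and irreducibility your sketch is in the right direction but, as you yourself note, incomplete; in the paper these are again delegated to \cite{BGPMN}.
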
 
 
One relation between the stability of a coherent system and that of the underlying vector bundle is given by the following:

\begin{lemma}\label{props}
Let $(F,V)$ be a coherent system of type $(r,rg,r)$,  which is generically generated. Then we have the following properties:
\begin{enumerate}
\item{} If $F$ is semi-stable, then $(F,V)$ is $\alpha-$semi-stable for any $\alpha\geq 0$;
\item{} if $F$ is stable,  then $(F,V)$ is $\alpha-$stable for any $\alpha > 0$;
\item{} if $\alpha >  g(r-1)$, then either $(F,V)$ is $\alpha-$semi-stable or  there exists a subsystem $(G,W)$ of type $(s,d,k)$ with $s=k$ and  
$\frac{d}{s} > g$.   
\end{enumerate}
\end{lemma}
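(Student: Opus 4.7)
My plan is to base all three items on two structural observations about a generically generated coherent system $(F,V)$ of type $(r,rg,r)$. First I would show that any coherent subsystem $(G,W)$ of type $(s,d,k)$ automatically satisfies $k\le s$: since $\dim V = r = \rk F$ and $V\otimes\cO_C\to F$ has torsion cokernel, the evaluation is an isomorphism on a general fibre, so the subspace $W\subseteq V$ injects into $G_x\subseteq F_x$ at general $x\in C$.

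Next I would establish an a priori degree bound: every subbundle $H\subseteq F$ satisfies $\deg H \le rg$. To see this, the evaluation produces an exact sequence
$$0 \longrightarrow \cO_C^{\,r} \longrightarrow F \longrightarrow T \longrightarrow 0$$
with $T$ torsion of length $rg$. The intersection $H\cap \cO_C^{\,r}$ has full generic rank $\rk H$, hence is a rank-$s$ subsheaf of the trivial bundle $\cO_C^{\,r}$ and so has non-positive degree; at the same time $H/(H\cap \cO_C^{\,r})$ embeds into $T$, so its length is at most $rg$. Summing these two contributions yields $\deg H \le rg$. I expect this bound, and its interaction with the $\alpha$-slope inequality, to be the main technical step on which the argument turns.

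Items (1) and (2) then reduce to direct calculation. For a proper subsystem $(G,W)$ with $G\subsetneq F$, the inequalities $k/s \le 1$ and $d/s \le g$ (strict if $F$ is stable) combine to give $\mu_\alpha(G,W) \le g+\alpha$, strict in case (2) whenever $\alpha>0$. The borderline case $G = F$, $W\subsetneq V$ is handled separately using $\dim W < r$, which makes $\mu_\alpha(F,W) = g+\alpha\dim W/r$ strictly less than $g+\alpha$ for any $\alpha > 0$.

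For (3), suppose $(F,V)$ is not $\alpha$-semi-stable for some $\alpha > g(r-1)$ and pick a destabilizing subsystem $(G,W)$ of type $(s,d,k)$, so that $d - sg > \alpha(s-k)$. The case $s = r$ is ruled out exactly as in (1). If $k < s$, then $s-k \ge 1$ and the hypothesis $\alpha > g(r-1)$ forces
$$d > sg + \alpha(s-k) \ge sg + g(r-1) = g(s+r-1) \ge rg,$$
in direct contradiction with the bound $\deg G \le rg$ from the second observation. Hence necessarily $k=s$, and the destabilizing condition then reduces to $d/s > g$, exhibiting the subsystem required by the statement.
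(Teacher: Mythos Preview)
Your proof is correct and follows essentially the same strategy as the paper: both hinge on the observation $k\le s$ for every subsystem (proved identically, via injectivity of $W$ into a general fibre) and on the degree bound $d\le rg$, after which (1)--(3) are direct slope computations. The only cosmetic difference is how you reach $d\le rg$: the paper notes that the quotient $F/G$ is again generically generated and hence has non-negative degree, whereas you intersect $G$ with the trivial subsheaf $\cO_C^{\,r}$ and bound the two pieces; the contradiction in (3) is then the same inequality, just solved for $d$ rather than for $\alpha$.
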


\begin{proof}
Let $(G,W)$ be a proper coherent subsystem of $(F,V)$ of type $(s,d,k)$:
$$1 \leq s \leq r,  \quad  0 \leq k \leq r, \quad  (G,W) \not= (F,V).$$
We show that   $(F,V)$  generically generated implies   $\frac{k}{s}  \leq  1$. 
We can consider  the map ${ev_{F,W}} \colon W \otimes \cO_C \to  F$,  its image is a subsheaf $Im (ev_{F,W})$ of $F$. The inclusion $W \subset V$ implies  $Im(ev_{F,W})$ has  generically rank $k$. 
Finally,  $W \subset H^0(G)$ so  $Im (ev_{F,W})$ is a subsheaf of $G$,   which implies $k \leq s$. 
\hfill\par
(1)-(2) Suppose that $F$ is  semi-stable. Since for any proper coherent subystem $(G,W)$ of type $(s,d,k)$ we have  $\frac{k}{s}  \leq 1$  then   for any $\alpha$ we have:    $\mu_{\alpha}(G,W)  \leq  \mu_{\alpha}(F,V)$. 
In particular, if $F$ is stable, then it is easy to see that $(F,V)$ is  $\alpha$-stable if $\alpha > 0$.
\hfill\par
(3) Let  $(G,W)$ be a coherent subsystem  of type $(s,d,k)$ which contradicts the $\alpha-$semi-stability of 
$(F,V)$. As we have seen, $\frac{k}{s}  \leq  1$. If $\frac{k}{s} = 1$, then $\frac{d}{s} > g$. If $F$ is not semi-stable and 
$(F,V)$ is not $\alpha-$semi-stable. If $\frac{k}{s} < 1$, then $\frac{F}{G}$ is generically generated too, hence $deg(\frac{F}{G}) \geq 0$.
This implies $d  \leq  rg$. So we have:

$$ g + \alpha  <  \frac{d}{s} + \alpha \frac{k}{s} \leq \frac{rg}{s} + \alpha \frac{k}{s},$$

\noindent which implies:

$$\alpha  <   \frac{g(r-s)}{s-k} \leq g(r-1).$$

\end{proof}

\begin{lemma} There is a natural birational   map $b \colon \urg \dashrightarrow {\bar G}_L(r,rg,r).$ 
\end{lemma}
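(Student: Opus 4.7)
The plan is to define $b$ on a dense open subset of $\urg$ by the natural rule $F \mapsto (F, H^0(F))$, and then deduce birationality from a dimension count against $\bar G_L(r,rg,r)$.

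Let $U \subset \urg$ be the locus of stable $F$ such that $h^0(F) = r$ and $(F, H^0(F))$ is generically generated. By Riemann--Roch every $F \in \urg$ has $\chi(F) = rg - r(g-1) = r$, so $h^0(F) = r$ is equivalent to the open condition $h^1(F) = 0$. Combined with openness of stability and of generic generation, this makes $U$ open. For non-emptiness, I would take $F_0 = L_1 \oplus \cdots \oplus L_r$ with general $L_i \in \pic^g(C)$: each $L_i$ is non-special with a unique section vanishing on a degree-$g$ divisor $D_i$; for generic choices the $D_i$ are disjoint, and $F_0$ is then polystable of slope $g$ with $h^0(F_0) = r$ and evaluation map surjective away from $\bigcup D_i$. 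Since $\urg$ is irreducible and the stable locus is dense, the open set $U$ — which contains an open neighborhood of $F_0$ intersected with the stable locus — is non-empty and dense.

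On $U$, I would set $b(F) := (F, H^0(F))$, a coherent system of type $(r, rg, r)$. By Lemma \ref{props}(2), since $F$ is stable and $(F, H^0(F))$ is generically generated, this coherent system is $\alpha$-stable for every $\alpha > 0$; in particular it defines a point of $G_L(r,rg,r) \subset \bar G_L(r,rg,r)$. The resulting morphism $b|_U$ is manifestly injective, since the coherent system determines its underlying bundle. By Proposition \ref{terminal}, $\bar G_L(r,rg,r)$ is irreducible of dimension $r^2(g-1) + 1 = \dim \urg$, so the injection $b|_U$ between irreducible varieties of the same dimension is automatically dominant, and $b$ is birational.

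The main obstacle is the non-emptiness of $U$, namely producing a single stable bundle of slope $g$ with $r$ sections that generically generate it; the split bundle $F_0$ above, combined with density of the stable locus and openness of the defining conditions, handles this. Everything else is a packaging of Lemma \ref{props}, Proposition \ref{terminal} and a dimension count.
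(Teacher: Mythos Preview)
Your proof is correct and matches the paper's approach essentially line for line. The paper defines the same open set (phrasing generic generation as ``the determinant map $d_F \colon \wedge^r H^0(F) \to H^0(\det F)$ is nonzero''), establishes non-emptiness via the same split bundle $F_0 = \bigoplus_i L_i$ followed by deformation into the stable locus, invokes Lemma~\ref{props}(2) to land in $\bar G_L(r,rg,r)$, and concludes birationality by the identical dimension and irreducibility count against Proposition~\ref{terminal}.
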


\begin{proof}
Let $U_{r,rg} \subset \urg$ be the subset of stable points $[F]$  satisfying 
 the following properties:

\begin{enumerate}
\item{}  $ h^0( F) = r$;
\item{} the determinant map $d_{F} \colon {\wedge}^r H^0(F) \to H^0(det F)$ is not the zero map.
\end{enumerate}

The conditions that define the set $U_{r,rg}\subset \urg$ are clearly open; we prove that $U_{r,rg}$ is not empty.
  Let $F_0 = L_1 \oplus L_2 \oplus .. \oplus L_r$, with $L_i$ a line bundle of degree $g$ and $h^0(L_i) = 1$ for any $i =1,.,r$.  Then $F_0$ satisfies  (1) and (2). Let $F_t$ be a stable deformation of $F_0$ along a one parameter family $T$. For a generic $\bar{t}\in T$, $F_{\bar{t}}$ satisfies (2). Moreover, by semicontinuity $h^0(F_{\bar{t}}) \leq r$ and
  by Riemann-Roch $h^0(F_{\bar{t}}) \geq r$, so $F_{\bar{t}}$ satisfies (1) too,  hence $[F_{\bar{t}}] \in U_{r,rg}$. 
\hfill\par
Let $[F] \in U_{r,rg}$.   
The pair $(F, H^0(F))$ is a coherent system of type $(r,rg,r)$, by Property (2) it is generically generated and  by Lemma \ref{props} (2)  it is $\alpha-$stable. So we have a morphism 
\begin{eqnarray}\label{bi}
b \colon  & U_{r,rg} \to  & {\bar G}_L(r,rg,r);\\
{[F]} & \to & {[(F,H^0(F))]}, 
\end{eqnarray}

it is easy to see that it  is an isomorphism onto the image.  Since   $\dim \urg = \dim {\bar G}_L(r,rg,r)$ and ${\bar G}_L(r,rg,r)$ is irreducible, we conclude that $b$ induces a birational map between the moduli spaces  $\urg$ and ${\bar G}_L(r,rg,r)$.
\end{proof}

\section{The fundamental divisor of a coherent system} 

\rm
Let $(F,V)$ be an $\alpha-$stable coherent system  of type $(r,rg,r)$ on the curve $C$. Assume that  it is generically generated, 
then the  map $ev_{F,V} \colon V \otimes \cO_C \to F$ is a generically surjective map between two vector bundles of the same rank, so its degeneracy locus is an effective divisor on the curve $C$:

$${\mathbb D}_{(F,V)} \in \vert \det F \vert .$$

Moreover, let us consider the restriction to $\wedge^rV$ of the determinant map of $F$  

\begin{eqnarray*}
d_{F,V} \colon {\wedge}^rV & \lra & H^0(C,\det F);\\
s_1 \wedge s_2 ... \wedge s_r & \mapsto & ( x \mapsto s_1(x)  \wedge s_2(x)  \wedge... \wedge s_r(x) ).
\end{eqnarray*}
\noindent
This is not zero so its image is a line generated by a non-zero global section $\sigma$ of $H^0(\det F)$:

$$d_{F,V}({\wedge}^r V)= \text{Span} (\sigma),  \quad \sigma \in H^0(C,det F).$$
\noindent
It is easy to see that ${\mathbb D}_{(F,V)} = \text{Zeros} (\sigma).$

\begin{definition}
We call ${\mathbb D}_{(F,V)}$ the {\it fundamental divisor} of   $(F,V)$. 
\end{definition}

\hfill\par
Let  $(F,V) = (F_1,V_1) \oplus (F_2,V_2)$, assume that for $i=1,2$, the pair $(F_i,V_i)$ is an $\alpha-$stable coherent system, which is generically generated too.  Then we have $ {\mathbb D}_{(F,V)}= {\mathbb D}_{(F_1,V_1)} + {\mathbb D}_{(F_2,V_2)}.$

\begin{definition}
Let $[F,V]$ be the S-equivalence class of an $\alpha-$semi-stable coherent system $(F,V)$. We define the \textit{fundamental divisor of} $[F,V]$ as

$${\mathbb D}_{[F,V]}:= {\mathbb D}_{\gr(F,V)}.$$
\end{definition}

Note that if $(F,V)$ is $\alpha-$stable, then $\gr(F,V) = (F,V)$, so that $ {\mathbb D}_{[F,V]}= {\mathbb D}_{(F,V)}$.
\newcommand{\xrg}{C^{(rg)}}
\noindent
Every coherent system $(F,L)\in {\bar G}_L(r,rg,r)$ is generically generated (see Prop. \ref{terminal}), hence we can define the following map.

\begin{eqnarray}
\Phi \colon {\bar G}_L(r,rg,r) & \ra & C^{(rg)}\\ 
{[F,V]} & \mapsto & \mathbb{D}_{\gr(F,V)}.
\end{eqnarray}

\begin{definition}
We call $ \Phi$ the {\it fundamental map} of generically generated coherent systems of type $(r,rg,r)$.
\end{definition}

\section{The  fundamental map and its fibers}\label{secdet}

The aim of this section is the description of the fibers of $\Phi$. We start by showing some basic properties of the map $\Phi$ itself.

\begin{theorem}\label{domino}
For any $r \geq 2$,  $\Phi \colon {\bar G}_L(r,rg,r) \to  C^{(rg)}$ is a surjective morphism. 
\end{theorem}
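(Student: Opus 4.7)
The plan is to prove the statement in two stages. First I would verify that $\Phi$ is a morphism of schemes; then I would exhibit explicit preimages over a dense subset of $C^{(rg)}$ and conclude surjectivity by a properness argument.

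For the morphism step, the central issue is that $\bar{G}_L(r,rg,r)$ parametrizes $S$-equivalence classes and the fundamental divisor is defined through the graded object $\gr(F,V)$, so I need a construction that visibly descends from families of coherent systems. My proposal is to show first the fiberwise identity $\mathbb{D}_{(F,V)} = \mathbb{D}_{\gr(F,V)}$ for every $\alpha$-semi-stable $(F,V)$: with respect to a Jordan--H\"older filtration $0 = (F_0,V_0) \subset \cdots \subset (F_n,V_n) = (F,V)$ with graded pieces $(G_j,W_j)$, the evaluation map $V \otimes \cO_C \to F$ becomes upper-triangular with diagonal blocks $W_j \otimes \cO_C \to G_j$; since the determinant of an upper-triangular homomorphism of vector bundles of equal rank is the product of the diagonal determinants, the section $\sigma \in H^0(\det F)$ whose vanishing defines $\mathbb{D}_{(F,V)}$ factors as $\sigma_1 \cdots \sigma_n$ with $\sigma_j$ cutting out $\mathbb{D}_{(G_j,W_j)}$. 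Granting this, algebraicity of $\Phi$ is standard: any local universal family $(\cF, \cV)$ over a base $S$ produces a relative evaluation $\cV \boxtimes \cO_C \to \cF$ of locally free sheaves of the same rank on $S \times C$; its determinant cuts out a Cartier divisor flat of degree $rg$ over $S$, defining a morphism $S \to C^{(rg)}$ that glues on an \'etale cover and descends to $\bar{G}_L(r,rg,r)$.

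For surjectivity, let $D \in C^{(rg)}$ be general. Write $D = D_1 + \cdots + D_r$ with $D_i \in C^{(g)}$ generic, so that the line bundles $L_i = \cO_C(D_i)$ all satisfy $h^0(L_i) = 1$. Set $F = \bigoplus_{i=1}^r L_i$ and $V = \bigoplus_{i=1}^r H^0(L_i)$; then $(F,V)$ is a coherent system of type $(r, rg, r)$, its evaluation map is block-diagonal, and the $i$-th block cuts out exactly $D_i$, so $(F,V)$ is generically generated and $\mathbb{D}_{(F,V)} = D$. Because $F$ is a direct sum of line bundles of the same degree $g$ it is semi-stable of slope $g$, and Lemma~\ref{props}(1) gives that $(F,V)$ is $\alpha$-semi-stable for every $\alpha \geq 0$; in particular it defines a point of $\bar{G}_L(r,rg,r)$ mapping to $D$ under $\Phi$. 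Hence the image of $\Phi$ contains a dense subset of $C^{(rg)}$. Since $\bar{G}_L(r,rg,r)$ is projective, $\Phi$ is proper, its image is closed, and we conclude that $\Phi$ is surjective.

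The step I expect to be hardest is the $S$-equivalence reduction in stage one: the fiberwise identity $\mathbb{D}_{(F,V)} = \mathbb{D}_{\gr(F,V)}$ is elementary linear algebra, but promoting the pointwise construction into a genuine morphism out of the coarse moduli space — keeping everything compatible across the strictly semi-stable locus, where the Jordan--H\"older filtration can jump — is where one has to be careful in globalizing the evaluation-map construction.
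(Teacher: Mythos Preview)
Your argument is correct and follows the same overall strategy as the paper: both prove the morphism claim via the degeneracy locus of the relative evaluation map of a family, and both build preimages as direct sums $\bigoplus \cO_C(D_i)$ of degree-$g$ line bundles with one-dimensional spaces of sections. Two differences are worth noting. For surjectivity, the paper works directly over \emph{every} $G \in C^{(rg)}$: any such $G$ decomposes as $G_1 + \cdots + G_r$ with $G_i$ effective of degree $g$, and one simply takes $V_i = \langle \sigma_i \rangle$ for a section cutting out $G_i$, with no hypothesis that $h^0(\cO_C(G_i)) = 1$; this makes your genericity-plus-properness detour unnecessary. Conversely, your verification that $\mathbb{D}_{(F,V)} = \mathbb{D}_{\gr(F,V)}$ via the upper-triangular form of the evaluation map relative to a Jordan--H\"older filtration is a genuine addition: the paper's family argument only produces $\mathbb{D}_{(F_s,V_s)}$ and silently identifies it with the defining quantity $\mathbb{D}_{\gr(F_s,V_s)}$, so your step closes a gap the paper leaves open.
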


\begin{proof}
 Let $({\mathcal F}, {\mathcal V})$ be a flat family of $\alpha$-semi-stable generically generated coherent systems of type $(r,rg,r)$ over a scheme $S$.
Then ${\mathcal F}$ is a rank $r$ vector bundle on $C \times S$, ${\mathcal F}_{\vert C \times s} = F_s$ is a 
 vector bundle   of rank $r$ and degree $rg$ on $C$. Let $p_1$ and $p_2$ be the natural projections of $C \times S$ onto its factors,   ${\mathcal V} \subset (p_2)_*{\mathcal F}$ is a vector bundle 
 of rank $r$ on $S$,  with  fiber  $V_s$ at the point $s$.    Finally, $(F_s, V_s)$ is a generically generated coherent system of type $(r,rg,r)$ on the curve $C$, which is $\alpha-$semi-stable, for any $s \in S$. Let $\mu \colon S \to {\bar G}_L(r,rg,r)$ be the  morphism defined by the family,  sending  $s \to (F_s, V_s)$. 
 \hfill\par
 Let $E_{v} \colon (p_1)^* {\mathcal V} \to {\mathcal F}$ be the natural evaluation map, note that it is a  map between two vector bundles of the same rank, moreover   for any $s \in S$ we have:  ${E_{v}}_{\vert C \times s}= ev_{F_s,V_s}$. This implies that  the degeneracy locus of $E_v$ is a relative divisor ${\mathcal D}$ on $C$ over $S$ of relative degree $rg$. For any $s \in S$ we have ${\mathcal D}_{\vert C \times s} = {\mathbb D}_{F_s,V_s}$. 
This induces a morphism ${\Phi}_S \colon S \to C^{(rg)}$, sending $s \to {\mathbb D}_{F_s,V_s}$, such that     ${\Phi}_S = {\Phi} \circ \mu$.
This proves that $\Phi$ is a morphism.
\hfill\par
Let us come to surjectivity. Let  $G$ be a point set in $C^{(rg)}$,  note that $G$ can be written as the sum of $r$ effective divisors of degree $g$:

$$G= G_1 + G_2 +... + G_r.$$ 

For any $i =1,..,r$, let ${\sigma}_i \in   H^0(\cO_C(G_i))$ be a non-zero global section  such that $G_i = \text{Zeros}({\sigma}_i)$ and let 
$V_i = \text{span} ({\sigma}_i ) \subset H^0(\cO_C(G_i))$. 
Now let us define the following pair:

 $$F \colon = \bigoplus_{i=1}^r \cO_C(G_i) \quad V \colon = \bigoplus_{i=1}^r V_i.$$
\noindent
Then $(F,V)$ is a generically generated coherent system of type $(r,rg,r)$ on $C$ and $F$ is semi-stable. By Lemma \ref{props} (1), $(F,V)$ is $\alpha-$semi-stable and $\mathbb D_{F,V} = G$.
This implies that $\Phi([F,V]) = G$ and proves the surjectivity.
\end{proof}

Let $({\PP}^{r-1})^{rg} // PGL(r)$ denote  the GIT quotient of $({\PP}^{r-1})^{rg}$ with respect to the diagonal action of $PGL(r)$. We recall that the notion of GIT stability of a point set  $ v \in ({\PP}^{r-1})^{rg}$ has a nice geometric formulation in terms of the dimension of the linear span in $\PP^{r-1}$ of subsets of $v$ ( see \cite{do:pstf} Thm. 1 p. 23).

\begin{proposition}
The point set $v= (v_1,\dots,v_{rg})  \in ({\PP}^{r-1})^{rg}$ is GIT semi-stable\newline ( resp. stable ) if and only if for any subset $\{v_1,\dots,v_k\}$ of $v$ we have 

$$ \dim(  Span(v_1,\dots,v_k )) \geq \frac{k}{g}  \ \ ( \text{resp.} > ).$$
\end{proposition}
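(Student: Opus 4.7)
The plan is to apply the Hilbert--Mumford numerical criterion to the natural linearization $\mathcal{O}(1,\dots,1)$ on $(\PP^{r-1})^{rg}$, which is $SL(r)$-equivariant and (since $r$ divides $rg$) descends to a $PGL(r)$-linearization, so that it computes the GIT quotient in question. Any 1-parameter subgroup $\lambda\colon\gm\to SL(r)$ is, after conjugation, diagonal with integer weights $a_1\le\dots\le a_r$ summing to $0$ in an eigenbasis $e_1,\dots,e_r$ of $\CC^r$; with the standard convention the Mumford weight of a point $v_j\in\PP^{r-1}$ is $\mu(v_j,\lambda)=\min\{a_i:e_i\text{ occurs in a lift of }v_j\}$, and $\mu(v,\lambda)=\sum_{j=1}^{rg}\mu(v_j,\lambda)$. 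By the criterion, $v$ is semi-stable (resp.\ stable) if and only if $\mu(v,\lambda)\le 0$ (resp.\ $<0$) for every non-trivial $\lambda$.

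The key computation is for the \emph{two-block} 1-PS $\lambda_W$ attached to a proper non-zero subspace $W\subset\CC^r$ of dimension $d$: let it act with weight $r-d$ on $W$ and weight $-d$ on a chosen complement, so that $(r-d)d+(-d)(r-d)=0$ places $\lambda_W$ in $SL(r)$. Setting $k=\#\{j:v_j\in\PP(W)\}$, direct inspection shows that $\mu(v_j,\lambda_W)=r-d$ exactly when $v_j\in\PP(W)$ and $-d$ otherwise, so
\begin{equation*}
\mu(v,\lambda_W)=k(r-d)+(rg-k)(-d)=r(k-dg).
\end{equation*}
Hence $\mu(v,\lambda_W)\le 0$ (resp.\ $<0$) is equivalent to $d\ge k/g$ (resp.\ $>$). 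Applied to $W=\mathrm{Span}(v_{i_1},\dots,v_{i_k})$ this gives the stated necessary condition; conversely, any proper $W$ containing $k$ of the $v_j$ has dimension at least that of their span, so the spanning inequality implies the numerical one for every $W$.

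It remains to reduce the Hilbert--Mumford verification to two-block subgroups. Given $\lambda$ with ordered weights $a_1\le\dots\le a_r$ summing to $0$, set $F_d=\mathrm{Span}(e_{r-d+1},\dots,e_r)$ and $\lambda_d:=\lambda_{F_d}$. A telescoping calculation yields $\lambda=\sum_{d=1}^{r-1}c_d\lambda_d$ with $c_d=(a_{r-d+1}-a_{r-d})/r\ge 0$, and some $c_d$ is positive whenever $\lambda$ is non-trivial. The decisive point, and the one place the argument is not quite formal, is that although $\mu(v_j,\cdot)$ is a minimum and hence a priori only concave in $\lambda$, for fixed $v_j$ the minimum is attained at a single index $i^{*}_j=\min I_j$, where $I_j$ indexes the non-zero components of $v_j$; and because both $a_i$ and $(\lambda_d)_i$ are non-decreasing in $i$, this \emph{same} index realizes the minimum for every $\lambda_d$. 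Therefore $\mu(v_j,\lambda)=a_{i^{*}_j}=\sum_d c_d(\lambda_d)_{i^{*}_j}=\sum_d c_d\mu(v_j,\lambda_d)$, and summing over $j$ gives $\mu(v,\lambda)=\sum_d c_d\mu(v,\lambda_d)$. With $c_d\ge 0$ and some $c_d>0$, non-positivity (resp.\ strict negativity) of every $\mu(v,\lambda_d)$ forces $\mu(v,\lambda)\le 0$ (resp.\ $<0$), which closes the argument. Beyond this additive decomposition across two-block subgroups, the remainder is either the formal Hilbert--Mumford setup or the one-line weight computation above.
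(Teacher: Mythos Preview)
Your proof is correct. The paper itself does not prove this proposition at all: it simply quotes the result from Dolgachev--Ortland (\cite{do:pstf}, Thm.~1, p.~23), so there is no ``paper's own proof'' to compare against. What you have written is essentially the standard argument via the Hilbert--Mumford criterion, and all the steps check out: the weight computation for the two-block one-parameter subgroup $\lambda_W$ is right, and the reduction of a general diagonal $\lambda$ to a non-negative combination of the $\lambda_{F_d}$ is the key convexity trick. Your observation that the same index $i^*_j=\min I_j$ realizes the minimum simultaneously for $\lambda$ and for every $\lambda_d$ (because all the weight sequences involved are non-decreasing in $i$) is exactly the point that makes $\mu(v,\cdot)$ genuinely additive here rather than merely concave, and you flagged it clearly.

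One cosmetic remark: as literally stated in the paper, the strict inequality for stability cannot hold for the full subset $k=rg$ (it would force $\dim\mathrm{Span}>r$). Your argument implicitly handles this correctly, since the two-block $\lambda_W$ is only non-trivial for proper non-zero $W$, and the Hilbert--Mumford criterion concerns non-trivial $\lambda$; the intended reading is that the strict inequality is required only for subsets with proper span. This is an imprecision in the statement rather than a defect in your proof.
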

 
The first important feature of the map $\Phi$ is the following.

\begin{theorem}\label{genfib}
The general fiber of $\Phi$ is isomorphic to $({\PP}^{r-1})^{rg} // PGL(r)$.
\end{theorem}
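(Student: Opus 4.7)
The plan is to fix a general $D = p_1 + \cdots + p_{rg} \in C^{(rg)}$ (distinct points) and identify $\Phi^{-1}(D)$ explicitly with the GIT quotient $(\PP^{r-1})^{rg}//\PGL(r)$.

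First, for $[F,V] \in \Phi^{-1}(D)$, Proposition~\ref{terminal} gives that $(F,V)$ is generically generated, so $ev_{F,V} \colon V \otimes \OO \to F$ is a generically surjective map between rank $r$ bundles and hence injective as a sheaf map. Its cokernel is a torsion sheaf of length $rg$ supported on $D$; since the $p_i$ are distinct it is $\bigoplus_{i=1}^{rg} \CC_{p_i}$, giving
\begin{equation*}
0 \to V \otimes \OO \to F \to \bigoplus_{i=1}^{rg} \CC_{p_i} \to 0.
\end{equation*}
Passing to fibers at $p_i$, the map $V \to F_{p_i}$ has a one-dimensional kernel $L_i \subset V$, producing a point $[L_i] \in \PP(V) \simeq \PP^{r-1}$. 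The assignment $[F,V] \mapsto ([L_1],\ldots,[L_{rg}])$ defines a map
\begin{equation*}
\Psi \colon \Phi^{-1}(D) \longrightarrow (\PP^{r-1})^{rg}/\PGL(r),
\end{equation*}
the quotient by $\PGL(V)$ absorbing the choice of a trivialization $V \simeq \CC^r$.

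Next I would construct an inverse via Hecke modifications. Given a tuple $(L_1,\ldots,L_{rg})$ of lines in $V$, let $F$ be the subsheaf of $V \otimes K_C$ (with $K_C$ the constant sheaf of rational functions) agreeing with $V \otimes \OO$ away from $D$ and at $p_i$ equal to $V \otimes \OO_{C,p_i} + t_i^{-1} L_i \otimes \OO_{C,p_i}$ for a local parameter $t_i$. Then $F$ is a rank $r$ vector bundle with $V \otimes \OO \hookrightarrow F$ of cokernel $\bigoplus \CC_{p_i}$, and the tautological embedding $V \hookrightarrow H^0(F)$ makes $(F,V)$ a generically generated coherent system with $\mathbb{D}_{(F,V)} = D$ and $\ker(V \to F_{p_i}) = L_i$. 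This is inverse to $\Psi$ modulo $\GL(V)$. Carrying out the same construction in families over $(\PP^{r-1})^{rg}$, using the tautological sub-line bundles on each factor, yields a universal family, and the resulting $\PGL(r)$-invariant morphism will descend to upgrade the set-theoretic bijection to a scheme-theoretic isomorphism, once stability is matched.

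The core technical step is the comparison of $\alpha$-stability with GIT stability. For a subspace $W \subset V$ of dimension $s$, a local computation on the Hecke model shows that the saturation $G_W \subset F$ of $W \otimes \OO$ has $\deg G_W = k(W) := \#\{i : L_i \subset W\}$, so $(G_W, W)$ is a coherent subsystem of type $(s, k(W), s)$. By Lemma~\ref{props}(3), for $\alpha > g(r-1)$ the $\alpha$-(semi)stability of $(F,V)$ is equivalent to $k(W)/s < g$ (resp.\ $\leq g$) for every proper $W \subsetneq V$. Taking $W$ to be the span of a subset of the $L_i$'s, and conversely letting a subspace $W$ determine the subset $\{i : L_i \subset W\}$, shows this is exactly the GIT (semi)stability criterion for the point set $(L_1,\ldots,L_{rg})$ recalled in the proposition preceding the theorem. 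S-equivalence on both sides corresponds to the closed orbit of the polystable graded object and agrees under $\Psi$.

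The main obstacle I expect is precisely this stability comparison. A priori one must check $\alpha$-(semi)stability against all coherent subsystems, not only those of the special form $(G_W, W)$ with rank equal to the dimension of $W$. The crucial input is Lemma~\ref{props}(3), valid only in the terminal chamber $\alpha > g(r-1)$, which forces destabilizing subsystems to have $s = k$; once this reduction is in place, the equivalence with GIT stability becomes a straightforward combinatorial identity.
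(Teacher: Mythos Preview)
Your proof is correct and takes essentially the same approach as the paper: the paper dualizes the evaluation sequence to obtain points $[v_i]\in\PP(V)$ from the surjection $V^*\otimes\cO_C\to\cO_B$ (these coincide with your kernel lines $L_i$), builds the inverse family as the dual of the kernel of that surjection (your Hecke modification in different language), and matches $\alpha$-(semi)stability with GIT (semi)stability via Lemma~\ref{props}(3) exactly as you outline. The only cosmetic difference is that the paper passes to $\gr(F,V)$ from the outset, since $\Phi$ is defined via the graded object, whereas you defer the S-equivalence bookkeeping to the last line.
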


\begin{proof}
Let  $B \in C^{(rg)}$, we assume  that $B$ is not contained in the big diagonal $\Delta$, that is 

$$B = \sum_{i=1}^{rg}{x_i}, \quad x_i \not= x_j,  \quad  \forall  i \not= j, \quad x_i\in C, \forall i.$$
\noindent
Then the fiber of $\Phi$ at $B$ is the following:

$${\Phi}^{-1}(B) =  \{ [(F,V)] \in    {\bar G}_{L}(r,rg,r) \  \vert \  {\mathbb D}_{\gr(F,V)} = B \ \}.$$ 
\noindent
Let  $[F,V] \in {\Phi}^{-1}(B)$, from now on we will write for simplicity $\gr(F,V) := (F_g,  V_g)$. Since $B$ is the degeneracy locus of the evaluation map $ev_{ F_g, V_g}$, by dualizing we find the following exact sequence:

\begin{equation}\label{sequa2}
0 \to F_g^* \to V_g^* \otimes \cO_C \to \cO_B \to 0.
\end{equation}
\noindent
Hence, up to the choice of a basis of $V_g$, $F_g^*$ is the kernel of a surjective morphism $v_{F_g,V_g} \in \mathrm{Hom}(V_g^* \otimes \cO_C,\cO_B):$

$$ v_{F_g,V_g}= (v_1, \dots , v_{rg}), \quad v_i \not= 0, \quad v_i \in  \mathrm{Hom}(V_g^* \otimes \cO_C,\cO_{x_i}) \simeq   V_g.$$ 
\noindent
This means that $(F_g,V_g)$ defines a point (that we will still denote by $v_{F_g,V_g}$) in the  product space  $({\PP (V_g)})^{rg}\cong({\PP}^{r-1})^{rg}$, and $PGL(r)$ acts diagonally on this space via the choice of the basis of $V_g$.

\medskip

{\it Claim 1: $v_{F_g,V_g} \in ({\PP}^{r-1})^{rg}$ is GIT semi-stable with respect to the diagonal action of $PGL(r)$. If $(F_g,V_g)$ is $\alpha-$stable then $v_{F_g,V_g}$ is GIT stable}

\smallskip

\begin{proof}
Let  $w=\{v_1,\dots,v_d\}$ be a subset of $v_{F_g,V_g}$, set $W \colon =Span(v_1,\dots,v_d)$.  Let $x_i$ be the point of $B$ that corresponds to $v_i$, for 
$i =1,..,d$.   Then $W \subset V_g$,  so we get a commutative diagram as follows:

$$\xymatrix{ 0  \ar[r] &  F_g^* \ar[d]   \ar[r]  & V_g^* \otimes \cO_C   \ar[d]    \ar[r]^{v_{F_g,V_g}} & \cO_B  \ar[d] \ar[r] & 0  \\
0  \ar[r]  &  G^*  \ar[r]  &   W^* \otimes \cO_C   \ar[r]^{w}  &   \oplus_{i=1}^d\cO_{x_i}  \ar[r] & 0 \\ }$$ 

\noindent for some vector bundle $G^*$ with  $rk(G^*)=dim (W) = s$. Note that the pair $(G,W)$ is a coherent system  of type $(s,d,s)$, which is a proper  subsystem of  $(F_g,V_g)$. Since this is $\alpha-$semi-stable, we have:

$$ \mu_{\alpha}(G,W) = \frac{d}{s} + \alpha \leq  \mu_{\alpha}(F_g,V_g) = g + \alpha,$$

\noindent which implies 
$s \geq \frac{d}{g}$, which gives  the GIT semi-stability of $v_{F_g,V_g}$. 
The stable case is described in the same way but with strict inequalities.
\end{proof}

Let $V$ be a vector space of dimension $r$ and $\PP(V)$ the associated projective space. By mimicking sequence (\ref{sequa2}), we can construct a flat family of coherent systems of type $(r,rg,r)$ on $C$ over $(\PP(V))^{rg}$.
Let  $v=(v_1,\dots,v_{rg}) \in \PP(V)^{rg}$,   $v$ defines  a  surjective morphism of sheaves $V^* \otimes \cO_C \ra \cO_B$ as follows:  it is the zero map out of the support of $B$ and it  is obtained by taking one lift of $v_i$ to $V^*$ and applying it on the fiber of $V ^*\otimes \cO_C$ over $x_i \in B$. The morphism  depends on the choice of the lift but the kernel of the sequence

\begin{equation}\label{univ}
0\lra \ker(v) \lra V^*\otimes \cO_C \stackrel{v}{\lra} \cO_B \to 0,
\end{equation}

is well defined over $\PP(V)^{rg}$. This implies that $F_v:=\ker(v)^*$, for  $v \in {\PP(V)}^{rg}$, is  a family of   rank $r$ vector bundles on $C$ with determinant $\cO_C(B)$. Note that $F_v$ is generically generated by a linear subspace of $H^0(C,F_v)$ of dimension $r$:  we will denote it by $V_v$. The pair $(F_v, V_v)$ is a coherent system of type $(r,rg,r)$, generically generated  and
$\mathbb D_{F_v,V_v} = B$.  Moreover the family $(F_v, V_v)$  is invariant under the diagonal action of $PGL(r)$ on ${\PP(V)}^{rg}$

\medskip

{\it Claim 2: let  $v \in \PP(V)^{rg}$ be  GIT semi-stable (resp. stable), then  the pair $(F_v,V_v)$ is $\alpha-$semi-stable (resp. stable) for $\alpha  >   g(r-1)$, hence $[F_v,V_v] \in {\bar G}_L(r,rg,r)$}.

\smallskip

\begin{proof} Let $\alpha > g(r-1)$. Since $(F_v,V_v)$ is generically generated, we can apply  Lemma \ref{props} (3): either $(F_v,V_v)$  is $\alpha-$semi-stable, or
there exists a proper  subsystem $(G,W)$  of type $(s,d,k)$ with $s = k$ and $\frac{d}{s} > g$.  
Then $(G,W)$ is generically generated too, and we have a commutative diagram:

$$\xymatrix{   0  \ar[r] & W \otimes O_C  \ar[d]   \ar[r]  & G \ar[d]    \ar[r]^{{v_{G,W}}} & \oplus_{i=1}^d\cO_{x_i}   \ar[d] \ar[r] & 0  \\
 0  \ar[r] & V \otimes \cO_C   \ar[r] & F_v  \ar[r]^{{v = v_{F_v,V}}}   &   \c O_B  \ar[r] & 0 \\
 }$$ 

Let $v_{G,W}=(v_1,.,v_d)$ and  $Span (v_1,..,v_d ) \subset W \subset V$  with 
$\dim W = s$. Since $v \in \PP(V)^{rg}$ is GIT semi-stable then we have that $\dim W  \geq \frac{d}{g}$, 
which  contradicts the hypothesis. This proves that $(F_v,V_v)$ is $\alpha-$semi-stable.\\ 
Suppose now that we have a stable point set $v$, and that there exists a proper coherent subsystem $(G,W)\subset (F_v,V_v)$ of type $(s,d,k)$ s.t. $\mu_{\alpha}(G,W)=\mu_{\alpha}(F_v,V_v)$. Since $\alpha$ is not a critical value, $\frac{k}{s}=1$, and thus $s=\frac{d}{g}$ (see Def. \ref{critic}). But, keeping the same notation of the first part of the proof, if $v$ is stable, then $dim(W)=s>\frac{d}{g}$. Hence $(F_v,V_v)$ is $\alpha$-stable.
\end{proof}

\smallskip

If $\alpha>g(r-1)$, Claim 2 and the fact that the family of coherent systems over $\PP(V)^{rg}$ is $PGL(r)$-invariant allow us to define a morphism 

\begin{eqnarray}\nonumber
\PP(V)^{rg}//PGL(r) & \rightarrow &  {\Phi}^{-1}(B).\\ \nonumber
\end{eqnarray}

Indeed, the family over $\PP(V)^{rg}$ induces a morphism from the semi-stable locus to ${\bar G}_L(r,rg,r)$. This morphism is $PGL(r)$-invariant hence it factors through $\PP(V)^{rg}//PGL(r)$. Furthermore, by construction its inverse is the map sending each system  $[F,V] \mapsto [v_{F_g,V_g}]$ (see Claim 1). Hence we conclude that the general fiber of $\Phi$ is isomorphic to $\PP(V)^{rg}//PGL(r)$. 
\end{proof}

\medskip
Let us denote $\alpha_1>0$ the smaller positive critical value for coherent systems of a given type $(n,d,k)$. The stability condition induced by values of $\alpha$ s.t. $0 < \alpha < \alpha_1$ is called $0^+$-stability (see \cite{hidalgos} page 4-5). It is well known that if a coherent system $(F,V)$ is $0^+$-stable then the underlying vector bundle $F$ is slope-semistable.
On the other hand, if the underlying vector bundle $F$ is stable, then $(F,V)$ is $0^+$-stable. In the following example,  we suppose $r=2$ and $g(C)>3$.  We  produce  a coherent system on $C$ of type $(2,2g,2)$ which comes from a GIT stable point and is $\alpha$-semi-stable for any $\alpha \geq 1$, but the underlying vector bundle is unstable, i.e. the coherent system is not $0^+$-semi-stable.

\smallskip

\begin{example}\label{destable} 
Let $C$ be a smooth  curve of genus $g >3$.  Let $L_1$ be a special line bundle  of degree $g-1$ with   $h^0(C,L_1)=2$. Let  $ \{ t_1, t_2 \}$ be  a basis of $H^0(C,L_1)$. We can assume that the zeros of $t_1$ are all simple and distinct. By definition, given a scalar $\lambda$, one may find at most a finite set (possibly empty) of points $x \in C$ such that $t_1(x)=\lambda t_2(x)$. Hence we are allowed to choose a set of $g+1$ distinct points $z_1,\dots, z_{g+1}$ such that:  $t_1(z_i) \neq  0$, $t_1(z_i)= \lambda_i t_2(z_i)$, with $\lambda_i \neq \lambda_j$ for $1 \leq i \neq j \leq g+1$ and $\lambda_i \neq 0$. 
Let $M:= z_1 + \dots + z_{g+1}$,  $L_2:=\cO_C(M)$ and let $s$ be a non-zero global section of  $H^0(C,L_2)$ such that $M = Zeros(s)$.
 Now let us consider the coherent system $(F,V)$ of type $(2,2g,2)$ defined as follows:

$$  F \colon = L_1 \oplus L_2 \quad V:= \langle t_1, t_2 + s \rangle.$$
\noindent
Note that the evaluation map

$$ ev_{F,V} \colon V \otimes \cO_C \longrightarrow F,$$
\noindent is generically surjective, degenerating on the divisor $B:=Zeros(t_1) + Zeros(s)$. 
\hfill\par\noindent
We show that  $(F,V) \in {\bar G}_{L}(r,rg,r)$, hence $(F,V) \in {\Phi}^{-1}(B)$, but the underlying vector bundle  is not semi-stable:  in fact $L_2 \subset F$ is a destabilizing sub-bundle.
Notably, we prove that $(F,V)$ is $\alpha$-semi-stable for any $\alpha \geq 1$. Let  $(G,W)$ be a coherent subsystem of 
type $(s,d,k)$. It is easy to see that  $\mu_{\alpha}(G,W) >  \mu_{\alpha}(F,V)$ implies that
$s =1$, $d = g+1$,   $k= 0$ and $\alpha < 1$. Note that $\alpha = 1$ is a critical value for coherent systems of type $(2,2g,2)$. 
\hfill\par
Finally,  let   $v = (v_1,v_2,\dots,v_{2g}) \in \PP(V)^{2g}$ be the point set defined by the pair $(F,V)$, as in the proof of Thm. \ref{genfib}. By the assumptions we made  choosing  $M$ and the sections $t_i$ and $s$, the point set $v$ has the following shape:
 for $i = 1,\dots,g-1$,  \  $x_i \in Zeros(t_1)$, so $[ v_i] = [1:0]$;   for $i = g,\dots,2g$, \  $x_i \in   Zeros(s)$, so  we have:
 $[v_i] = [\lambda_i : 1]$, with $\lambda_i \neq 0$.
Such a point set $v \in \PP(V)^{2g}$ is clearly stable.

\end{example}

\medskip

As the reader may expect, given the birationality result between the moduli space $\urg$ and $G_L(r,rg,r)$, the fundamental map gives information also on the geometry of $\urg$ and of the theta map. 

\begin{theorem}\label{thetabirat} 
Let $b$ be the birational map defined in (\ref{bi}):

$$ b  \colon \urg \dashrightarrow \bar{G}_L(r,rg,r), \quad [F] \to [ 
(F,H^0(F))].$$ 
\noindent
Let $b_{D}$ be its restriction to $\mathcal{SU}_C(r,\cO_C(rD))$ and 
let $\theta_{rg}$ be the theta map of $\urg$ defined in (\ref{theta}): 
and ${\theta}_D$ its restriction to $\mathcal{SU}_C(r,\cO_C(rD))$, 
then we have  the following  commutative diagrams of rational maps

$$\xymatrix{ \urg \ar@{-->}[r]^{b} \ar@{-->}[d]_{{\theta}_{rg}} & \bar{G}_L(r,rg,r)  \ar[d]^{\Phi} \\
\mathcal T \ar@{-->}[r]^{a^*} & C^{(rg)} \\}    \quad 
\xymatrix{  \mathcal{SU}_C(r,\cO_C(rD)) \ar@{-->}[r]^{b_D} \ar@{-->}[d]_{{\theta}_{D}} & b_D(\mathcal{SU}_C(r,\cO_C(rD)))  \ar[d]^{\Phi} \\
\vert r \Theta_D \vert \ar@{-->}[r]^{a^*} &  \vert \cO_C(rD) \vert  \\}$$

\noindent where $a^* $ is the pull-back of theta divisors via the  Abel map $a \colon C \to \pic^{(1)}(C)$. Notably, the restricted map
 
$${\pi}_e = {a^*}_{ | \vert r \Theta_D \vert} \colon  \vert r \Theta_D \vert  \to   \vert\cO_C(rD) \vert $$

\noindent is a linear projection with center $L_e := \{ M \in  \vert r \Theta_D \vert \  \colon \  a(C) \subset M \ \}$. 
\end{theorem}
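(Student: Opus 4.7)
The plan is to check commutativity of the first square on general points, deduce the second square by restriction to the fiber over $\cO_{C}(rD) \in \pic^{rg}(C)$, and finally identify $a^{*}$ on $|r\Theta_{D}|$ as the projectivisation of pullback of sections. Fix $F$ in the open subset $U_{r,rg} \subset \urg$ on which $b$ is defined, and set $V := H^{0}(F)$, so $\dim V = r = \mathrm{rk}\,F$ and the evaluation $ev_{F,V}\colon V \otimes \cO_{C} \to F$ is generically an isomorphism. Going right-then-down, $\Phi(b(F)) = \mathbb{D}_{(F,V)}$ is the zero divisor of $\det(ev_{F,V})$, a section of $\det F = \cO_{C}(rD)$. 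Going down-then-right, $a^{*}\theta_{rg}(F)$ is the pullback of $\Theta_{F}$, set-theoretically $\{x \in C : h^{0}(F(-x)) > 0\}$. Tensoring the ideal sequence of $x$ with $F$ gives $0 \to F(-x) \to F \to F|_{x} \to 0$, so $h^{0}(F(-x)) > 0$ iff the map $V \to F|_{x}$ fails to be injective; since both spaces have dimension $r$, this is equivalent to $\det(ev_{F,V})$ vanishing at $x$. Hence the two effective divisors have the same support and the same degree $rg$, and coincide once multiplicities are matched locally: this is the classical identification of the determinant section of $ev_{F,V}$ with the pullback via $a$ of a defining section of $\Theta_{F}$.

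The second square is the restriction of the first to the fiber of the determinant map $\urg \to \pic^{rg}(C)$ over $\cO_{C}(rD)$: by construction $b_{D}$ and $\theta_{D}$ are the restrictions of $b$ and $\theta_{rg}$, while $|r\Theta_{D}|$ and $|\cO_{C}(rD)|$ are the fibers of $p \colon \mathcal{T} \to \pic^{rg}(C)$ and of the Abel-Jacobi map $C^{(rg)} \to \pic^{rg}(C)$ over $\cO_{C}(rD)$. For the final assertion, the classical identification $a^{*}\Theta_{D} \sim D$ (for generic $D$ of degree $g$, $|D|$ is a single divisor $E$ and $a^{*}\Theta_{D} = E$) yields $a^{*}\cO(r\Theta_{D}) \cong \cO_{C}(rD)$; pullback of sections under $a$ thus gives a linear map
\[
H^{0}\bigl(\pic^{1}(C), \cO(r\Theta_{D})\bigr) \lra H^{0}\bigl(C, \cO_{C}(rD)\bigr),
\]
whose projectivisation is $\pi_{e}$. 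The projectivisation of any linear map of vector spaces is a linear projection whose center (indeterminacy locus) is the projectivisation of the kernel; here the kernel consists of sections of $\cO(r\Theta_{D})$ vanishing on $a(C)$, equivalently divisors $M \in |r\Theta_{D}|$ with $a(C) \subset M$, which is precisely $L_{e}$.

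The main technical obstacle is matching multiplicities (not merely supports) in the first paragraph: one needs to know that the determinant of the evaluation map agrees, up to a non-zero scalar, with the Abel-pullback of a local equation for $\Theta_{F}$. This is the classical identification of the Brill-Noether/theta divisor of $F$ with the degeneracy locus of $ev_{F, H^{0}(F)}$; once it is invoked, the rest of the proof is formal.
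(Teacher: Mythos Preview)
Your proof is correct and follows essentially the same route as the paper: both identify $\mathbb{D}_{(F,H^0(F))}$ with $a^*(\Theta_F)$ via the equivalence $x\in\mathrm{supp}\iff h^0(F(-x))>0$, and deduce the second square by restriction. Your write-up is in fact more complete, since you explicitly justify the linear-projection description of $\pi_e$ and flag the multiplicity issue (which, for commutativity of rational maps, is harmless once one restricts to the dense open where $\mathbb{D}_{(F,V)}$ is reduced); the paper's own proof leaves both of these points implicit.
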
 
  
 \begin{proof}
Let $[F] \in \urg$. Then $[F]$ is contained in the regular locus of $\theta_{rg}$ if and only if $[F]$ admits a theta divisor

 $$ \Theta_{F} = \{ L \in \pic^{(1)}(C) \vert \ h^0(C,F \otimes L^{-1}) \geq 1 \}.$$

Furthermore, the divisor $\Theta_F$ lies in the regular locus of $a^*$ if it does not contain the image of $C$ via $a$.
Remark in fact that in this case, the pull back of $\Theta_F$ via $a^*$ is an effective divisor of degree $rg$. We have:

$$a^*(\Theta_F) = \{ x \in C \vert \ h^0(C,F \otimes \cO_C(-x)) \geq 1 \}.$$

This implies that $h^0(F) = r$ and the evaluation map $ev_{F,H^0(F)} \colon  H^0(F) \otimes \cO_C \to F$ is generically surjective, hence $b([F])$ is defined. 
In order to show that the two diagrams commute it is enough to remark that if $x\in \mathbb D_{(F,H^0(F))}$ then there exist at least one non-zero section in $h^0(C,F \otimes \cO_C(-x))$, hence $\mathbb D_{(F,H^0(F))}$ and $a^*(\Theta_F)$ coincide.
\end{proof}
 
As a corollary of the results of this section we have the following:

\begin{theorem} For any  $r \geq 2$ and $g \geq 2$, 
the moduli space $\urg$  is birational to a fibration over $C^{(rg)}$  whose fibers are $(\PP^{r-1})^{rg}//PGL(r)$ and
the moduli space $\mathcal{SU}_C(r,\cO_C(rD))$ is birational to a fibration over ${\mathbb P}^{(r-1)g}= \vert \cO_C(rD) \vert $  whose fibers are $(\PP^{r-1})^{rg}//PGL(r)$.
\end{theorem}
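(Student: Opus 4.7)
The plan is to assemble both assertions from three pieces of machinery already established: the birational map $b \colon \urg \dashrightarrow \bar{G}_L(r,rg,r)$ of Section 4, the fundamental map $\Phi \colon \bar{G}_L(r,rg,r) \to C^{(rg)}$ of Section 5, and the description of its general fiber in Theorem~\ref{genfib}.

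For the first assertion, I would simply form the composition $\Phi \circ b \colon \urg \dashrightarrow C^{(rg)}$. This is a well-defined dominant rational map, since $b$ is birational and $\Phi$ is a surjective morphism by Theorem~\ref{domino}. By Theorem~\ref{genfib}, the general fiber of $\Phi$ is isomorphic to $(\PP^{r-1})^{rg}//PGL(r)$, and pulling back through the birational map $b$ shows that $\urg$ is birational to a fibration over $C^{(rg)}$ with the required fibers.

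For the second assertion, I would restrict the preceding picture to the fixed-determinant subvariety. First, since $\deg(rD) = rg > 2g-2$ when $r \geq 2$ and $g \geq 2$, Riemann-Roch gives $h^0(C, \cO_C(rD)) = (r-1)g + 1$, so $|\cO_C(rD)| \cong \PP^{(r-1)g}$. Under the Abel-Jacobi map $C^{(rg)} \to \pic^{rg}(C)$, this linear system is precisely the fiber over $\cO_C(rD)$. Since $\mathbb{D}_{(F,V)} \in |\det F|$ for every generically generated coherent system, the right-hand commutative square of Theorem~\ref{thetabirat} shows that $\Phi \circ b_D$ factors through $|\cO_C(rD)|$, yielding the sought-for rational map $\mathcal{SU}_C(r,\cO_C(rD)) \dashrightarrow \PP^{(r-1)g}$.

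The one step requiring real care, and what I expect to be the main obstacle, is the identification of the fibers of this restricted map. A priori, intersecting $\Phi^{-1}(B)$ with the fixed-determinant locus could shrink the fiber, so I must verify that for general $B \in |\cO_C(rD)|$ the whole GIT quotient $(\PP^{r-1})^{rg}//PGL(r)$ survives. Here I would invoke the inverse construction from the proof of Theorem~\ref{genfib}: the vector bundles $F_v$ produced from stable point configurations $v \in \PP(V)^{rg}$ are obtained as kernels of the sequence~(\ref{univ}), and hence come equipped with determinant \emph{exactly} $\cO_C(B)$, not merely some line bundle of degree $rg$. Therefore the entire fiber $\Phi^{-1}(B)$ lies in the image of $b_D$. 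Since a general $B \in |\cO_C(rD)|$ remains general in $C^{(rg)}$ (in particular off the big diagonal), Theorem~\ref{genfib} applies unchanged and supplies the desired identification of fibers.
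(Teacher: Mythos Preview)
Your proposal is correct and follows essentially the same approach as the paper, which presents this theorem as an immediate corollary of the birational map $b$, Theorem~\ref{domino}, Theorem~\ref{genfib}, and the commutative diagrams of Theorem~\ref{thetabirat}, without spelling out further details. You supply more justification than the paper does---in particular the Riemann--Roch computation for $\dim|\cO_C(rD)|$ and the observation that every $(F,V)\in\Phi^{-1}(B)$ already has $\det F\cong\cO_C(B)\cong\cO_C(rD)$, so the fiber does not shrink upon restriction to the fixed-determinant locus.
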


\section{ Application to the Coble hypersurfaces}

As Example \ref{destable} showed, in the case of vector bundles we cannot expect any isomorphism betweem the fibers of $\Phi$ and GIT quotients. Nevertheless, for low rank and genus, the geometry of moduli spaces of vector bundles is simple enough that, by taking the closure of the fibers, we still find, at least as projective varieties, families of GIT quotients contained in moduli spaces of vector bundles. This section is devoted to the construction and study of such families of projective varieties.

\medskip

When the genus of $C$ is 2 or 3 and the rank is small enough, the moduli spaces $\su$ have very nice explicit descriptions related to certain hypersurfaces, called the {\it Coble hypersurfaces}. The main theorems of this section show how these hypersurfaces contain large families of projective classical modular varieties related to invariant theory, namely the {\it Segre cubic} and the {\it Igusa quartic}. 

\smallskip

We know a purely vector bundle-theoretic construction (i.e. without the use of the theta map) of such families, but it is rather involved and, in our opinion, less instructive. For this reason and for the beauty of the objects of study, we have preferred to give a construction that relies on the theta map and the projective geometry of the Coble hypersurfaces.




\subsection{The Coble Sextic}\label{sextic}

In this subsection we assume that  $C$ is a curve of genus $2$ and we consider the moduli space $\cS\cU_C(3)$  of semi-stable vector bundles on $C$ with  rank $3$ and trivial determinant. The theta map 

$$\theta:\cS\cU_C(3) \ra |3\Theta| \simeq \PP^8$$ 

\noindent is a finite  morphism   of  degree $2$ and the branch locus is a sextic hypersurface $\cC_6$, called the Coble-Dolgachev sextic \cite{ivolocale}. The Jacobian variety $J(C)$ is embedded in $|3\Theta|^*$ as a degree 18 surface, and there exists a unique cubic hypersurface $\cC_3\subset |3\Theta|^*$ whose singular locus coincides with $J(C)$: {\it the Coble cubic}, \cite{coble}. 
It was conjectured by Dolgachev, and subsequently proved in \cite{orte:cob} and independently in \cite{Quang}, that $\cC_6$ is the dual variety of $\cC_3$. 
\hfill\par
Let us consider the moduli space  $\cS\cU(3,3K_C)$ and  let
 
$$\Phi_{K_C}: \cS\cU(3,3K_C) \dashrightarrow  |3K_C|, $$ 

\noindent be the rational map sending  $[F]$ to ${\mathbb D}_{\gr(F), H^0(\gr(F))}$. It is 
 the composition  of $b_{K_c}$ with the fundamental map $\Phi$.  
By Thm. \ref{thetabirat},  we have the following result:



\begin{proposition}\label{pitre}
 The map $\Phi_{K_C}$ is the composition of the theta map 
$$\theta_{K_C}: \cS\cU(3,3K_C)\ra |3{\Theta}_{K_C}|$$  with  the linear   projection ${\pi}_e \colon \vert 3 \Theta_{K_C} \vert \dashrightarrow \vert 3K_C \vert$, whose   center $L_e \simeq {\mathbb P}^3$,  is the linear subsystem of theta divisors corresponding to decomposable bundles of type $E \oplus K_C$, with $E\in \cS\cU_C(2,2K_C)$. 
\end{proposition}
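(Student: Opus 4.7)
The plan is to invoke Theorem \ref{thetabirat} directly: applied to $r=3$, $g=2$, $D=K_C$, it already gives the factorization $\Phi_{K_C}=\pi_e\circ\theta_{K_C}$ together with the description of the center as $L_e=\{M\in |3\Theta_{K_C}| : a(C)\subset M\}$. Thus it only remains to identify $L_e$ with the linear $\PP^3$ parametrizing theta divisors of decomposable bundles of the form $E\oplus K_C$ with $E\in\cS\cU_C(2,2K_C)$.

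The first step, which I view as the key one, is to exploit a feature special to genus $2$: the Abel-Jacobi image $a(C)\subset\pic^1(C)$ coincides with the canonical theta divisor $\Theta_{K_C}$. This is essentially Riemann's theorem in the lowest-dimensional case; concretely, for $L=\cO_C(x)$ one has $h^0(K_C\otimes L^{-1})\geq 1$ because the hyperelliptic pencil makes $K_C-x$ effective, and conversely every $L\in\Theta_{K_C}$ is linearly equivalent to a single point of $C$. Once $a(C)=\Theta_{K_C}$ is established (as divisors in $\pic^1(C)$), a divisor $M\in |3\Theta_{K_C}|$ contains $a(C)$ if and only if $\Theta_{K_C}$ appears as a component of $M$, i.e., $M=\Theta_{K_C}+N$ for some $N\in |2\Theta_{K_C}|$. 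Therefore $L_e=\Theta_{K_C}+|2\Theta_{K_C}|$ is a linear subsystem of $|3\Theta_{K_C}|$ of dimension $h^0(2\Theta_{K_C})-1=2^g-1=3$, hence isomorphic to $\PP^3$.

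To conclude, I would apply the classical Narasimhan-Ramanan theorem: for $g=2$, the theta map $\theta\colon\cS\cU_C(2,2K_C)\to |2\Theta_{K_C}|$ is an isomorphism onto $\PP^3$. For a bundle $F=E\oplus K_C$ with $E\in\cS\cU_C(2,2K_C)$ we have $\det F=2K_C+K_C=3K_C$ and $h^0(F\otimes L^{-1})=h^0(E\otimes L^{-1})+h^0(K_C\otimes L^{-1})$, so $\Theta_F=\Theta_E+\Theta_{K_C}$. Combined with the previous step this exhibits a bijection $E\mapsto\Theta_{E\oplus K_C}$ between $\cS\cU_C(2,2K_C)\simeq\PP^3$ and $L_e$, matching the description in the statement.

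The main obstacle is really only the identification $a(C)=\Theta_{K_C}$: it is a clean classical fact in genus $2$, but it is what makes the entire picture so rigid. In higher genus one only has a strict inclusion $a(C)\subsetneq\Theta_{K_C}$, and the corresponding description of $L_e$ would be substantially less transparent; the low-dimensional Riemann theorem is what lets us pin down $L_e$ as a linear subsystem and match it with the theta locus of decomposable bundles without any further case analysis.
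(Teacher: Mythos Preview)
Your argument is correct and complete, but it follows a genuinely different route from the paper's own proof. The paper argues indirectly: it first observes by a dimension count that $L_e$ must be a $\PP^3$ (since $\dim|3K_C|=4$), then shows that the locus $\mathcal{I}=\{[E\oplus K_C]:E\in\cS\cU_C(2,2K_C)\}$ lies in the indeterminacy locus of $\Phi_{K_C}$ because $h^0(\gr(E\oplus K_C))\geq 4$, and finally invokes the hyperelliptic involution and results from \cite{Quang} to conclude that $\theta_{K_C}(\mathcal{I})$ is a linearly embedded $\PP^3$, hence equal to $L_e$.

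Your approach bypasses the indeterminacy locus and the external reference entirely: by exploiting the genus-$2$ coincidence $a(C)=\Theta_{K_C}$, you identify $L_e$ \emph{intrinsically} as the sublinear system $\Theta_{K_C}+|2\Theta_{K_C}|$, and then match it with the decomposable locus via the direct computation $\Theta_{E\oplus K_C}=\Theta_E+\Theta_{K_C}$ and the Narasimhan--Ramanan isomorphism $\cS\cU_C(2,2K_C)\cong|2\Theta_{K_C}|$. This is more elementary and self-contained; it also makes transparent \emph{why} the center is linear and why it has the right dimension, rather than obtaining these facts separately. The paper's route, on the other hand, has the advantage of tying the picture to the involution $\sigma$ governing the double cover $\theta_{K_C}$, which is useful for the subsequent analysis of the Coble sextic.
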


\begin{proof}
 Since $dim \ |3K_C| \ =4$,  the center of the projection ${\pi}_e$  is a 3 dimensional linear subspace $L_e$, which is the image by 
 the theta map of the indeterminacy locus of $\Phi_{K_C}$. 
Let us  consider the subset  ${\mathcal I} \subset \cS\cU(3,3K_C)$ of  semi-stable bundles  $[F] = [E \oplus K_C]$, $[E ] \in \cS\cU_C(2,2K_C)$.
Note that ${\mathcal I} \simeq  \cS\cU_C(2,2K_C) \simeq {\mathbb P}^3$ \cite{rana:cra}. Moreover, since $h^0(\gr (E \oplus K_C) \geq  4$, then  ${\mathcal I}$  is contained in the indeterminacy locus of ${\Phi}_{K_C}$. 
Finally, let  $h$ be the hyperelliptic involution  on $C$, it defines the involution 

\begin{eqnarray*}
\sigma: \cS\cU(3,3\cK_C) & \ra &  \cS\cU(3,3\cK_C);\\
F & \mapsto & h^*F^*;
\end{eqnarray*}

which is associated  to the 2:1 covering given by $\theta_{K_c}$. Then ${\mathcal I}$ is contained in the fixed locus of $\sigma$,  \cite{Quang} (Sect. 3 and 4).  The image via $\theta_{K_C}$ of this locus is  $L_e \simeq {\PP^3}$ and  it is actually the center of the projection.
\end{proof}
\hfill\par
By Thm. \ref{thetabirat}, a general fiber of $\Phi_{K_C}$ is birational to the quotient  $(\PP^2)^6 // PGL(3)$.   This is a degree two covering of $\PP^4$ branched along a $\Sigma_6$-invariant quartic hypersurface $\cI_4 \subset \PP^4$.
$\cI_4$ is known as the {\it Igusa quartic}, it  is the Satake compactification of the moduli space $\cA_2(2)$ of principally polarized abelian surfaces with a level two structure \cite{ig:tc1}, embedded in $\PP^4$ by fourth powers of theta-constants.  The involution that defines the covering is the Gale transform (also called association, for details see \cite{do:pstf}, \cite{eis:pop}), which is defined as follows. 

\begin{definition} (\cite{eis:pop}, Def. 1.1) Let $r,s \in \ZZ$. Set $\gamma=r+s+2$, and let $\Gamma \subset \PP^r,$ $\Gamma' \subset \PP^s$ be ordered nondegenerate 
sets of $\gamma$ points represented by $\gamma \times (r+1)$ and $\gamma \times (s+1)$ matrices $G$ and $G'$, respectively. We say that $\Gamma'$ is the Gale transform of $\Gamma$ if there exists a nonsingular diagonal $\gamma \times \gamma$ matrix $D$ s.t. $G^T\cdot D \cdot G'=0$, where $G^T$ is the transposed matrix of $G$.

\end{definition}

The Gale transform acts trivially on the sets of 6 points in $\PP^2$ that lie on a smooth conic. The branch locus of the double covering is then, roughly speaking, the moduli space of 6 points on a conic and henceforth a birational model of the moduli space of 6 points on a line. One can say even more, in fact the GIT compactification of the moduli space of 6 points on a line is a cubic 3-fold in $\PP^4$, called the Segre cubic, and its projectively dual variety is the Igusa quartic (see \cite{koike}, \cite{hu:gsq} for details). From the projective geometry point of view the singular locus of $\cI_4$ is an abstract configuration of lines and points that make up a $15_3$ \textit{configuration}. This means the following: there are 15 distinguished lines and 15 distinguished points. Each line contains 3 of the points and through each point pass 3 lines (see \cite{dolgabstract} Sect. 9 for more). Moreover $\cI_4$ is the only hypersurface with such a singular locus in the pencil of $\Sigma_6$-invariant quartics in $\PP^4$ (\cite{huntnice}, Example 7).

\medskip


Now let us recall some results from the literature about $\cS\cU(3,\cO_C)$.
Of course the twist by $K_C$ is an isomorphism and it is easy to understand the analogous result for $\cS\cU_C(3,3K_C)$. Since $\PP^8$ is smooth, the image of the singular locus of $\cS\cU(3,\cO_C)$ and the singular locus of the branch locus coincide, i.e. $Sing(\cC_6)=\theta(Sing(\cS\cU(3)))$. On $\pic^1(C)$ we have the involution $\lambda: L \mapsto K_C \otimes L^{-1}$ that leaves $\Theta$ invariant.
%
%
Hence $\lambda$ induces an action on all the powers of $\Theta$ and in particular, on $|3\Theta|$. The linear system $|3\Theta|$ decomposes in two eigenspaces, respectively 4 and 3 dimensional. We call $\PP^4_e$ the 4-dimensional eigenspace. It turns out that it  cuts out on $\cC_6$ a reducible variety given by a double $\PP^3$ (which is indeed contained in $Sing(\cC_6)$) and a quartic hypersurface $I \subset \PP^4$.  After the twist by $K_C$,  the first component is precisely  $L_e$, whereas the quartic 3-fold is an Igusa quartic. 

\begin{lemma}
The intersection of the closure of the general fiber of $\pi_e$ with $Sing(\cC_6)$ is a $15_3$ configuration of lines and points.
\end{lemma}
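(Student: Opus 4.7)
The plan is to identify the closure of a general fiber of $\pi_e$ with a copy of the Igusa quartic $\cI_4$ and then to invoke the classical description of the singular locus of $\cI_4$ as a $15_3$ configuration of lines and points.

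First, let $F=\overline{\pi_e^{-1}(p)}$ for a general $p\in |3K_C|$; this is a $\PP^4\subset \PP^8$ containing $L_e$ as a hyperplane. Exactly as in the special case $F=\PP^4_e$ recalled above, one expects
\[
F\cap \cC_6 \;=\; 2L_e + I_F,
\]
with $I_F$ a quartic hypersurface in $F$: the multiplicity $2$ along $L_e$ is forced by $L_e\subset \mathrm{Sing}(\cC_6)$, while the residual degree $4$ follows from $\deg\cC_6=6$. I would deduce this from the $\PP^4_e$ case by a flatness/semicontinuity argument as $F$ moves in the family of $\PP^4$'s containing $L_e$.

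Next, I would identify $I_F$ with the Igusa quartic. By Proposition~\ref{pitre} combined with Theorems~\ref{thetabirat} and~\ref{genfib}, the preimage $\theta_{K_C}^{-1}(F)$ is birational to the closure of a general fiber of $\Phi_{K_C}$, hence to $(\PP^2)^6 /\!/ \PGL(3)$. The latter is realized as the double cover of $\PP^4$ branched along $\cI_4$, with the Gale transform as covering involution. On the other hand, $\theta_{K_C}^{-1}(F)\to F$ is the double cover branched along $F\cap \cC_6$; normalization erases the squared component $2L_e$, leaving the reduced branch divisor $I_F$. Comparing the two double-cover structures forces $I_F\simeq \cI_4$, whose singular locus is classically the $15_3$ configuration (cf.\ \cite{huntnice}, \cite{dolgabstract}).

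It remains to show that $\mathrm{Sing}(\cC_6)\cap \overline{I_F}$ coincides with $\mathrm{Sing}(I_F)$. At any $q\in I_F\setminus L_e$, generic transversality of $F$ with $\cC_6$ at smooth points of $\cC_6$ yields $q\in \mathrm{Sing}(\cC_6)$ if and only if $q\in \mathrm{Sing}(I_F)$, producing the $15_3$ configuration. The main obstacle is the excess locus $L_e\cap I_F$: because $L_e\subset \mathrm{Sing}(\cC_6)$, the naive set-theoretic intersection would also contain this entire quartic surface. I would handle it by interpreting ``the general fiber of $\pi_e$'' inside the birational fibration of $\cC_6$ over $\PP^4$ induced by $\pi_e$---equivalently, taking the strict transform of $I_F$ after blowing up the base locus $L_e$---so that $I_F$ is separated from $L_e$ before intersecting with $\mathrm{Sing}(\cC_6)$. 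Verifying cleanly that each of the $15$ singular lines of $I_F$ meets $L_e$ only at one of the $15$ distinguished points of the configuration, and that no further incidences are created, is in my view the technical heart of the lemma.
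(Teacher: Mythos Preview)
Your approach is genuinely different from the paper's, and it has a real gap.

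The paper does \emph{not} first identify the residual quartic with $\cI_4$ and then read off its singular locus. Instead it argues directly on the vector-bundle side, using that $\mathrm{Sing}(\cC_6)=\theta(\text{strictly semi-stable locus})$ is exactly the locus of decomposable bundles. For a reduced $G=q_1+\cdots+q_6\in|3K_C|\setminus\Delta_{3K_C}$ it lists all decomposable bundles in the fiber over $G$: the $15$ totally split bundles $\cO_C(q_i+q_j)\oplus\cO_C(q_k+q_l)\oplus\cO_C(q_m+q_n)$ give the $15$ points, and for each pair $\{i,j\}$ the family $\cO_C(q_i+q_j)\oplus E$ with $E\in\cS\cU_C(2,3K_C-q_i-q_j)$ and $\mathbb D_E=\sum_{k\neq i,j}q_k$ gives a line (this uses the rank-$2$ fundamental map, whose fibers are lines). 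The incidences $3$ points per line and $3$ lines per point are checked combinatorially. No appeal to the Igusa quartic is needed, and in the paper's logical order this lemma \emph{precedes} the identification of the general fiber with $\cI_4$.

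The central gap in your argument is the step ``comparing the two double-cover structures forces $I_F\simeq\cI_4$''. From Theorems~\ref{thetabirat} and~\ref{genfib} you only get that $\theta_{K_C}^{-1}(F)$ is \emph{birational} to $(\PP^2)^6/\!/\PGL(3)$; two double covers of $\PP^4$ that are merely birational need not have projectively equivalent branch divisors. In the paper this identification is the content of Theorem~\ref{fibrusa}, proved by an entirely different mechanism: one special fiber ($\PP^4_e$) is known to cut out an Igusa quartic by \cite{oxpa:prv,minhigusa}, and then flatness of the family of residual quartics together with the infinitesimal rigidity of $\cI_4$ forces the generic fiber to be $\cI_4$ as well. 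Your sketch does not supply this, so as written the argument is circular relative to the paper's structure.

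There is a second, smaller gap. Even granting $I_F\cong\cI_4$, you need $\mathrm{Sing}(\cC_6)\cap(I_F\setminus L_e)=\mathrm{Sing}(I_F)\setminus L_e$. The inclusion $\subseteq$ is automatic, but your ``generic transversality'' for the reverse inclusion is not: the $\PP^4$'s $F$ are constrained to contain $L_e$, so a singular point of $I_F$ could in principle come from a tangency of $F$ with $\cC_6$ at a smooth point rather than from $\mathrm{Sing}(\cC_6)$. The paper's direct construction sidesteps this issue entirely, since it produces the $15$ lines and $15$ points inside $\mathrm{Sing}(\cC_6)$ by hand.
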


\begin{proof}
Recall that $Sing(\cC_6)$ is the locus of theta divisors corresponding to decomposable bundles, 
we will prove the claim by constructing explicitly these bundles.
Let us denote by $\Delta_{3K_C}$ the closed subset of $|3K_C|$ given by the intersection with the big diagonal of $C^{(6)}$. 
Let us take $G=q_1+ \dots +q_6\in |3K_C| - \Delta_{3K_C}$, and let us consider the fiber of $\Phi_{K_C}$ over $G$. In order to guarantee the semi-stability of the vector bundles, the only totally decomposable bundles in the fiber of $G$ are all the 15 obtained by permuting the $q_i$'s in $\cO_C(q_1 + q_2)\oplus \cO_C(q_3+q_4)\oplus \cO_C(q_5+q_6)$. Let us now consider the bundles that decompose as the direct sum of a line bundle $L$ and a rank two indecomposable bundle. By the previous argument of semi-stability then $L$ must be of the type $\cO_C(q_i+q_j)$ for some $i,j \in \{1,\dots,6\}$ and $E$ should have fundamental divisor $\DD_E=\sum_{k\neq i,j} q_k$. Call $F$ the line bundle $\cO_C(\sum_{k\neq i,j} q_k) \equiv 3K_C -q_i-q_j$. It is easy to see that $\cS\cU_C(2, F)\cong \cS\cU_C(2,\cO_C)\cong \PP^3$, the isomorphism being given by the tensor product by a square root $F'$ of $F$. Now we recall from \cite{bol:kumwed} the following description of the fundamental map $\Phi_{F'}:\cS\cU_C(2, F) \dashrightarrow |F|$. The linear system $|F|$ is a $\PP^2$ and the fibers of $\Phi_{F'}$ are just lines  passing through $D\in |F|$ and the origin $[\cO_C\oplus\cO_C]$. Now the composition of the following embedding

\begin{eqnarray}
\zeta: \cS\cU_C(2, F) & \hookrightarrow & \cS\cU_C(3,3K),\\
\left[ E \right] & \mapsto & \left[ \cO_C(q_i+q_j) \oplus E \right],
\end{eqnarray}
 
with the theta map is linear. In fact $\zeta(\cS\cU_C(2, F))$ is contained in the branch locus and the associated $3\Theta$ divisors form a three dimensional linear subsystem isomorphic to $|2\Theta|$. 
Then the image of $\zeta$ intersects the closure $\overline{\Phi^{-1}_{K_C}(G)}$ of the fiber over $G$ exactly along the fiber of $\Phi_{F'}$ over the divisor $\sum_{k\neq i,j} q_k \in |F|$. By \cite{bol:kumwed} we know that this is a line and it is not difficult to see that it contains 3 of the 15 totally decomposable bundles. On the other hand each totally decomposable bundle with fundamental divisor $G$ is contained in three lines of this kind.

\end{proof}

\begin{remark}\label{15dege}
When the divisor $G$ is taken in $\Delta_{3K_C}$ then the $15_3$ configuration  degenerates because some of the points and some of the lines coincide.
\end{remark}

\begin{theorem}\label{fibrusa}
The closure of the general fiber of ${\Phi}_{K_C}$ is the GIT quotient \\ $(\PP^2)^6//PGL(3)$.
\end{theorem}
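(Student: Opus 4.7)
The plan is to exploit the factorization $\Phi_{K_C} = \pi_e \circ \theta_{K_C}$ from Proposition~\ref{pitre}. For a general $G \in |3K_C| \cong \PP^4$, the preimage $P_G := \overline{\pi_e^{-1}(G)} \subset |3\Theta_{K_C}|$ is a linear $\PP^4$ containing the center $L_e \cong \PP^3$ of the projection, and the closure of the fiber of $\Phi_{K_C}$ over $G$ equals $\overline{\Phi_{K_C}^{-1}(G)} = \theta_{K_C}^{-1}(P_G)$. Since $\theta_{K_C}$ is the double cover branched along the Coble--Dolgachev sextic $\cC_6$, this exhibits the fiber as a double cover of $P_G \cong \PP^4$ branched along $\cC_6 \cap P_G$. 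Because $L_e \subset Sing(\cC_6)$ and $L_e$ is a hyperplane inside $P_G$, the sextic $\cC_6$ vanishes to order at least two along $L_e$ in $P_G$, so
$$\cC_6 \cap P_G \;=\; 2 L_e + Q_G,$$
where $Q_G \subset P_G$ is a residual quartic threefold. The non-reduced double hyperplane contributes trivially to the branching, and so $\overline{\Phi_{K_C}^{-1}(G)}$ is a double cover of $\PP^4$ ramified along the reduced quartic $Q_G$.

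The theorem therefore reduces to identifying $Q_G$ with the Igusa quartic $\cI_4$: it is classical (via the Gale/association involution, see \cite{do:pstf}, \cite{hu:gsq}) that the double cover of $\PP^4$ branched along $\cI_4$ is precisely $(\PP^2)^6//PGL(3)$. I would combine two ingredients. First, the preceding lemma produces a $15_3$ configuration of $15$ lines and $15$ points inside $\overline{\Phi_{K_C}^{-1}(G)}$ coming from the strictly semistable decomposable bundles over $G$; for generic $G$ none of these lines or points maps into $L_e$, so their images in $P_G$ give a $15_3$ configuration sitting inside $Sing(Q_G)$. Second, permutations of the six summands of $G = q_1 + \cdots + q_6$ preserve the fiber of $\Phi_{K_C}$ and descend through $\theta_{K_C}$ to a $\Sigma_6$-action on $Q_G \subset P_G$. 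By the uniqueness statement from \cite{huntnice}, Example~7, already cited in the text, the Igusa quartic is the only $\Sigma_6$-invariant quartic in $\PP^4$ whose singular locus carries such a $15_3$ configuration, hence $Q_G \cong \cI_4$ and $\overline{\Phi_{K_C}^{-1}(G)} \cong (\PP^2)^6//PGL(3)$.

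The hardest step will be setting up the $\Sigma_6$-action on $P_G$ itself: a priori $\Sigma_6$ acts only on the abstract set of six points of $G$, and one must transport this action into the projective ambient space through the moduli interpretation of the decomposable semistable locus, using the explicit description of the $15$ lines recalled in the preceding lemma. A subsidiary check, needed to ensure that $Q_G$ is a genuine irreducible quartic and not a further degeneration, is that for generic $G$ the subspace $P_G$ meets $\cC_6$ transversally away from $L_e$; this should follow from Remark~\ref{15dege} together with a dimension count, after which Hunt's classification closes the argument.
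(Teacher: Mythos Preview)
Your reduction matches the paper exactly: factor $\Phi_{K_C}=\pi_e\circ\theta_{K_C}$, note that $L_e\subset\mathrm{Sing}(\cC_6)$ so that $\cC_6\cap P_G=2L_e+Q_G$ with $Q_G$ a quartic threefold, and conclude that $\overline{\Phi_{K_C}^{-1}(G)}$ is the double cover of $\PP^4$ branched along $Q_G$. The gap is in your second ingredient. Permuting the points $q_1,\dots,q_6$ of the \emph{unordered} divisor $G\in|3K_C|$ is the identity map on $G$, hence the identity on the fibre $\Phi_{K_C}^{-1}(G)$ and on its theta-image in $P_G$; no nontrivial $\Sigma_6$-action on $P_G$ is produced this way. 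What is true is that the abstract $15_3$ configuration carries a combinatorial $\Sigma_6$-symmetry, but promoting this to a \emph{linear} action on the ambient $P_G$ amounts to showing that the fifteen points and lines already sit in $P_G$ in the standard Igusa position --- precisely the conclusion you are after. Since Hunt's uniqueness statement, as cited here, singles out $\cI_4$ only \emph{within the pencil of $\Sigma_6$-invariant quartics}, the invariance is genuinely required as input and cannot be extracted from the moduli description.

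The paper sidesteps this with a deformation argument rather than a direct characterisation of each fibre. From the literature (Oxbury--Pauly \cite{oxpa:prv}, Nguyen--Rams \cite{minhigusa}) one knows that for the single special value $T\in|3K_C|$ with $P_T=\PP^4_e$, the residual quartic is already an Igusa quartic. Blowing up $|3\Theta_{K_C}|$ along $L_e$ resolves $\pi_e$ into a proper flat morphism, so the $Q_G$ assemble into a flat family of quartic threefolds over $|3K_C|$. The key point is then that $\cI_4$ is infinitesimally rigid (its singular ideal is generated by the four polar cubics, \cite{hu:gsq}, Lemma~3.3.13), so a flat family with one Igusa fibre has generic fibre isomorphic to $\cI_4$. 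This uses the preceding lemma only as motivation, not as a logical step, and never needs a $\Sigma_6$-action on the individual $P_G$.
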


\begin{proof}
We recall that $L_e$ is contained in $Sing(\cC_6)$ and in particular \newline scheme-theoretically it is contained twice in $\cC_6$. Since ${\Phi}_{K_C} $ factors through the projection with center $L_e$, then $\overline{\Phi_{K_C}^{-1}(G)}$, for $G \in |3K_C| - \Delta_{3K_C}$, is a degree two cover of $\PP^4_G:= \overline{\pi_e^{-1}(G)}$ ramified along the intersection of $\cC_6$ with $\PP^4_G$ which is residual to $2L_e$. This intersection is then a quartic hypersurface in $\PP^4_G$. Notably, since $L_e \subset \PP_e^4$, there exist a point $T \in |3K_C|$ s.t. $\overline{\pi_e^{-1}(T)} \cap \cC_6 $ is an Igusa quartic (see Prop. 5.2 of \cite{oxpa:prv} or \cite{minhigusa} Sect. 4).


Let us now blow up $|3\Theta|$ along $L_e$ and call the resulting variety $\widetilde{\PP}^8$. This contains canonically the blown up Coble sextic, which we denote by $\widetilde{C}_6$. Then the rational map $\pi_e$ resolves in a proper, flat map $\widetilde{\pi}_e$ as in the following diagram.

\qquad \qquad \qquad \qquad \qquad $\xymatrix{ \widetilde{\PP}^8 \ar[d] \ar[dr]^{\widetilde{\pi}_e} &  \\
|3\Theta| \ar@{-->}[r]^{{\pi}_e}& |3K_C| }$

This says that the restriction of $\widetilde{\pi}_e$ to $\widetilde{C}_6$ is a flat family of quartic 3-folds over $|3K_C|$ and for any $B\in |3K_C|$ we have an isomorphism $\overline{\pi^{-1}_e(B)}\cap \cC_6 \cong \widetilde{\pi}^{-1}_e(B)\cap \tilde{C}_6$. Hence also one fiber of $\widetilde{\pi}_{e|\widetilde{C}_6}$ is an Igusa quartic. Since the ideal of the singular locus of $\cI_4$ is generated by the four polar cubics (\cite{hu:gsq}, Lemma 3.3.13) then the Igusa quartic has no infinitesimal deformations, i.e. it is rigid. This implies that the generic member of the flat family of quartics over $|3K_C|$ is an Igusa quartic. This in turn implies that the closure of the generic fiber of $\Phi_{K_C}$ is $(\PP^2)^6//PGL(3)$.
\end{proof}




\begin{corollary}
The Coble sextic $\cC_6$ is birational to a fibration over $\PP^4$ whose fibers are Igusa quartics.
\end{corollary}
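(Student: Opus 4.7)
The plan is to deduce this corollary almost immediately from Theorem \ref{fibrusa} and the construction carried out in its proof, which already does all the real work; one just has to re-read it as a statement about $\cC_6$ rather than about $\cS\cU_C(3,3K_C)$. First, I would recall from Proposition \ref{pitre} that the fundamental map factors as $\Phi_{K_C} = \pi_e \circ \theta_{K_C}$, where $\theta_{K_C}$ realizes $\cS\cU_C(3,3K_C)$ as a degree-$2$ cover of $|3\Theta_{K_C}|\cong \PP^8$ branched along $\cC_6$, and $\pi_e\colon \PP^8 \dashrightarrow \PP^4\cong |3K_C|$ is a linear projection with center $L_e\cong \PP^3$. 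Thus a fibration of $\cC_6$ over $\PP^4$ is already at hand rationally, namely the restriction of $\pi_e$ to $\cC_6$.

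Next I would resolve this rational map exactly as in Theorem \ref{fibrusa}: blow up $\PP^8$ along $L_e$ to obtain $\widetilde{\PP}^8$ with a proper flat morphism $\widetilde{\pi}_e\colon \widetilde{\PP}^8 \to \PP^4$, and let $\widetilde{\cC}_6$ denote the strict transform of the Coble sextic. Because $L_e$ is a proper closed subvariety of $\cC_6$, the induced morphism $\widetilde{\cC}_6 \to \cC_6$ is birational, so the restriction $\widetilde{\pi}_e|_{\widetilde{\cC}_6}\colon \widetilde{\cC}_6 \to \PP^4$ is a regular model of the desired fibration on $\cC_6$. Its scheme-theoretic fibers over a general point $G\in |3K_C|$ are, residually to $2L_e$ inside the corresponding $\PP^4$-fiber of $\pi_e$, the quartic 3-folds appearing in Theorem \ref{fibrusa}; by flatness this produces a flat family of quartic 3-folds in $\PP^4$ parametrized by $|3K_C|$.

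Finally I would invoke the rigidity of the Igusa quartic (its singular $15_3$ locus forces it to be the unique member of its $\Sigma_6$-invariant deformation family, equivalently the ideal of the singular locus is generated by the four polar cubics, as already used in Theorem \ref{fibrusa}), combined with the existence of at least one Igusa-quartic fiber produced in the proof of Theorem \ref{fibrusa} (via the Oxbury-Pauly / Quang Minh description of a distinguished fiber $\overline{\pi_e^{-1}(T)}\cap \cC_6$), to conclude that the generic fiber of $\widetilde{\pi}_e|_{\widetilde{\cC}_6}$ is an Igusa quartic. I do not expect a genuine obstacle here: the heavy lifting---resolution, flatness, and the rigidity argument---has already been done, so the only point requiring explicit care is observing that the birational model $\widetilde{\cC}_6$ transports the fibration back to $\cC_6$, which is automatic from $L_e \subsetneq \cC_6$.
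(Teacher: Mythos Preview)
Your proposal is correct and follows exactly the paper's approach: the paper states this corollary without a separate proof, treating it as an immediate consequence of Theorem~\ref{fibrusa} and the construction in its proof (blow-up along $L_e$, flatness of the resulting family of quartic 3-folds, existence of one Igusa-quartic fiber, and rigidity). Your write-up simply unpacks that argument and adds the harmless observation that $L_e\subsetneq\cC_6$ makes $\widetilde{\cC}_6\to\cC_6$ birational, which is precisely the point needed to transport the fibration back to $\cC_6$.
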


\begin{corollary}
Along the generic fiber of $\Phi_{K_C}$, the involution of the degree two covering $\cS\cU(3,\cO_C)\ra |3\Theta|$ coincides with the involution given by the association isomorphism on $(\PP^2)^6//PGL(3)$.
\end{corollary}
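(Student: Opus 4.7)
The plan is to exhibit both involutions as the (unique) non-trivial deck transformation of the \emph{same} double cover of $\PP^4_G$ branched along an Igusa quartic, so that they must coincide on tautological grounds.

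First I would unwind Proposition~\ref{pitre}, which factors $\Phi_{K_C}$ as $\pi_e \circ \theta_{K_C}$. Fixing a generic $G \in |3K_C|$ and writing $\PP^4_G := \overline{\pi_e^{-1}(G)}$, this factorisation forces the restriction of $\theta_{K_C}$ to the closed fiber $\overline{\Phi_{K_C}^{-1}(G)}$ to be a $2{:}1$ morphism onto $\PP^4_G$. The proof of Theorem~\ref{fibrusa} identifies its branch locus, for generic $G$, with an Igusa quartic $\cI_4 \subset \PP^4_G$. By construction, the global involution $\sigma \colon F \mapsto h^*F^*$, which generates the Galois group of $\theta_{K_C}$, restricts on this fiber to the non-trivial deck transformation of the local cover.

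Next I would invoke the classical description of $(\PP^2)^6//\PGL(3)$ (see e.g.~\cite{do:pstf}): it is itself a double cover of $\PP^4$ branched along an Igusa quartic, whose non-trivial deck involution is precisely the Gale/association transform, with ramification locus upstairs equal to the closure of the set of $6$-tuples lying on a smooth conic.

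The final step is to observe that the isomorphism $\overline{\Phi_{K_C}^{-1}(G)} \simeq (\PP^2)^6//\PGL(3)$ furnished by Theorem~\ref{fibrusa} is in fact an isomorphism of double covers over $\PP^4_G$. Indeed, a double cover of $\PP^4$ branched along a prescribed quartic hypersurface is unique up to isomorphism, since $\cO_{\PP^4}(2)$ is the only square root of $\cO_{\PP^4}(4)$; and the proof of Theorem~\ref{fibrusa} produces the isomorphism precisely through this common double-cover structure. As the non-trivial element of the Galois group of a connected double cover is unique, the restriction of $\sigma$ and the Gale involution must agree under this identification. The main conceptual point requiring care is this last compatibility of projections to $\PP^4_G$: one should verify that the isomorphism of Theorem~\ref{fibrusa} is genuinely built out of the two $2{:}1$ projections, so that the theta-involution cannot be confused with some automorphism of $(\PP^2)^6//\PGL(3)$ lying outside the Galois group of the cover. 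The uniqueness-of-square-root argument above dispels this potential ambiguity.
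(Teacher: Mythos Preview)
Your argument is correct and is precisely the reasoning the paper has in mind: the corollary is stated without proof, as an immediate consequence of the construction in Theorem~\ref{fibrusa}, and you have simply made explicit the underlying mechanism---that both the fiber $\overline{\Phi_{K_C}^{-1}(G)}$ and $(\PP^2)^6//\PGL(3)$ are realised as double covers of $\PP^4$ branched along an Igusa quartic, and that the identification of Theorem~\ref{fibrusa} is built from this common double-cover structure, so the unique non-trivial deck transformations must match. There is nothing to add.
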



\medskip

The fact that the intersection of $Sing(\cC_6)$ with the fibers over the open set $|3K_C| - \Delta_{3K_C}$ is precisely a $15_3$ configuration makes us argue that $|3K_C| - \Delta_{3K_C}$ should be the open locus where by rigidity the family of quartic three-folds is isotrivially isomorphic to the Igusa quartic. As already seen in Remark \ref{15dege}, if $B$ is an effective divisor out of this locus, then $\overline{\pi_e^{-1}(B)}\cap Sing(\cC_6)$ is a \textit{degenerate} $15_3$ configuration, in the sense that some of the 15 points and lines coincide. We are not able to prove the following, but it is tempting to say that this is all the singular locus of the special quartic three-folds over $\Delta_{3K}$. These would give very interesting examples of \textit{degenerate Igusa quartics}.
It would be interesting to study projective properties of these fibers such as the relation with the Segre cubics or with the Mumford-Knudsen compactification $\overline{\cM}_{0,6}$ of the moduli space of 6 points on a line. For instance, do they come from linear systems on $\overline{\cM}_{0,6}$? If it is so, what linear systems on $\overline{\cM}_{0,6}$ do they come from?


\medskip

The rational dual map of the Coble sextic has been thoroughly studied and described in \cite{orte:cob} and \cite{Quang}. Let us denote by $X_0,\dots ,X_8$ the coordinates on $\PP^8\cong |3\Theta|$ and by $F(X_0:\dots:X_8)$ the degree six poynomial defining $\cC_6$. Then the dual map is defined as follows:

\begin{eqnarray*}
\cD_6:\cC_6 & \dashrightarrow & \cC_3;\\
x & \mapsto & \left[\frac{\partial F}{\partial X_0}: \dots : \frac{\partial F}{\partial X_8}\right].
\end{eqnarray*}

The polar linear system is given by quintics that vanish along $Sing(\cC_6)$. Now fix a general divisor $B\in |3K|$ and let $\cI_B$ be the Igusa quartic defined by $(\cC_6\cap \overline{\pi_e^{-1}(B)}) - 2L_e$. Let us consider the restriction of $\cD_6$ to $\cI_B\subset \overline{{\pi}_e^{-1}(B)} =: \PP^4_B$ and denote by $A$ the $15_3$ configuration of points and lines in  $\PP^4_B$. Now let $H$ be the class of $L_e$ in $\pic(\PP^4_B)$ and consider the $4$- dimensional linear system $|\cI_A(3)+2H|$ on $\cI_B$. We can show the following.

\begin{proposition}\label{igudual}
The restricted dual map $\cD_{6|\cI_B}$ is given by a linear system $|\cD_{\cI_B}|$ that contains $|\cI_A(3)+2H|$ as a linear subsystem.
\end{proposition}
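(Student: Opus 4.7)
The plan is to compute the restrictions of the polar quintics $\partial F/\partial X_i$ of the Coble sextic to the fiber $\PP^4_B$ and then to $\cI_B$, and exhibit explicit quintics in $|\cI_A(3)+2H|$ lying in the image linear system.

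First I would fix homogeneous coordinates $X_0,\dots,X_8$ on $\PP^8 \cong |3\Theta_{K_C}|$ so that $L_e = V(X_0,\dots,X_4)$ and $\pi_e$ is the linear projection $[X_0:\dots:X_8]\mapsto[X_0:\dots:X_4]$. Fixing a representative $b=(b_0,\dots,b_4)\in\CC^5$ of $B\in|3K_C|\cong\PP^4$, I would parametrize $\PP^4_B$ by $[t:X_5:X_6:X_7:X_8]\mapsto[tb_0:\dots:tb_4:X_5:\dots:X_8]$; in these coordinates the hyperplane $H=L_e\cap\PP^4_B$ is cut out by $t=0$. The two geometric inputs I would use are $F|_{L_e}=0$ (because $L_e\subset\cC_6$) and $(\partial F/\partial X_i)|_{L_e}=0$ for every $i$ (because $L_e\subset Sing(\cC_6)$).

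Expanding $F(tb_0,\dots,tb_4,X_5,\dots,X_8)$ as a polynomial in $t$, the constant term is $F|_{L_e}=0$ and, by the chain-rule identity $\partial_t F|_{\PP^4_B}=\sum_{j=0}^4 b_j(\partial F/\partial X_j)|_{\PP^4_B}$, the linear term in $t$ also vanishes; hence $F|_{\PP^4_B}=t^2 Q_B$, where $Q_B$ is the defining quartic of $\cI_B$ (the extension from the eigenspace $\PP^4_e$ to general $B$ is guaranteed by the flat family of quartics constructed in the proof of Theorem \ref{fibrusa}). For $i\in\{5,6,7,8\}$ partial differentiation in $X_i$ commutes with the substitution $X_j=tb_j$, so $(\partial F/\partial X_i)|_{\PP^4_B}=t^2\,\partial Q_B/\partial X_i$. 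For $i\in\{0,\dots,4\}$ the polar vanishes on $L_e$ and is thus divisible by $t$, say $(\partial F/\partial X_i)|_{\PP^4_B}=t\,R_i$; the same chain-rule identity rewrites as $\sum_{j=0}^4 b_j R_j = 2 Q_B + t\,\partial Q_B/\partial t$, so that modulo $Q_B$ one obtains $\sum_{j=0}^4 b_j (\partial F/\partial X_j)|_{\cI_B}=t^2\,\partial Q_B/\partial t|_{\cI_B}$.

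Putting these computations together, the linear system $|\cD_{\cI_B}|$ generated by the nine restricted polars contains the five quintics $t^2\,\partial Q_B/\partial Y_j|_{\cI_B}$ as $Y_j$ ranges over the five coordinates $t, X_5, X_6, X_7, X_8$ on $\PP^4_B$. These are $2H$ plus the five polar cubics of the Igusa quartic $Q_B$. Since $Q_B$ is not a cone, its polar cubics are linearly independent, and by \cite{hu:gsq} (Lemma 3.3.13) they cut out the singular locus $A$ of $\cI_B$; combined with the asserted dimension $\dim|\cI_A(3)+2H|=4$, they span the entire vector space $t^2\cdot H^0(\cI_B,\cI_A(3))$, giving the inclusion $|\cI_A(3)+2H|\subseteq|\cD_{\cI_B}|$. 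The main obstacle is the justification of the factorization $F|_{\PP^4_B}=t^2 Q_B$ for arbitrary generic $B$, since a priori the intersection multiplicity of $\cC_6$ with $L_e$ could drop off the eigenspace locus; this is exactly where the flat-family and rigidity arguments underlying Theorem \ref{fibrusa} are indispensable, and where the proof most heavily relies on the specific projective geometry of the Coble sextic rather than on purely formal considerations.
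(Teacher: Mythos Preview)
The paper states Proposition~\ref{igudual} without proof, so there is no argument to compare against; your proposal supplies one. Your computation is essentially correct: choosing coordinates so that $L_e=\{X_0=\cdots=X_4=0\}$, the inclusion $L_e\subset\mathrm{Sing}(\cC_6)$ forces the Taylor expansion of $F$ along $\PP^4_B$ to start at $t^2$, and the chain-rule manipulation producing the five restricted quintics $t^2\,\partial Q_B/\partial Y_j$ is clean. The identification of these with a spanning set for $|\cI_A(3)+2H|$ then follows from the paper's assertion that this system is $4$-dimensional together with the fact that the polar cubics of a non-conical quartic are independent (and remain so modulo $Q_B$ for degree reasons).

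One small correction to your self-assessment at the end: the factorization $F|_{\PP^4_B}=t^2 Q_B$ itself is \emph{not} the obstacle---it is an immediate and purely formal consequence of $L_e\subset\mathrm{Sing}(\cC_6)$, valid for every $B$, and needs no flat-family or rigidity input. What genuinely requires Theorem~\ref{fibrusa} is only the identification of the residual quartic $Q_B$ with an Igusa quartic, so that its singular locus is the $15_3$ configuration $A$ and Hunt's Lemma~3.3.13 applies. You already invoke this earlier in the argument, so the closing paragraph overstates the difficulty; the proof stands without it.
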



Remark that this means that for the general fiber $\cI_B$, there exists a canonical way to project the image $\cD(\cI_B)\subset \cC_3$ to a $\PP^4_B$ where the image of $\cI_B$ is a Segre cubic. This is summarized in the following.

\medskip

\begin{corollary}
The Coble cubic is birational to a fibration in Segre cubics over  $\PP^4$.
\end{corollary}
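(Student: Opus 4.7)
The plan is to combine the Igusa-quartic fibration of $\cC_6$ over $|3K_C|\simeq \PP^4$ established in the preceding corollary with the dual map $\cD_6 \colon \cC_6 \dashrightarrow \cC_3$, exploiting the classical projective duality that identifies the Segre cubic with the dual variety of the Igusa quartic in $\PP^4$. In this way the Segre-cubic fibration on $\cC_3$ is obtained by transporting the Igusa-quartic fibration on $\cC_6$ through the birational duality $\cD_6$.

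First I would work fiberwise. For $B$ in a dense open subset of $|3K_C|$, Theorem \ref{fibrusa} identifies the residual component of $\overline{\pi_e^{-1}(B)}\cap \cC_6$ with the Igusa quartic $\cI_B\subset \PP^4_B$. By Proposition \ref{igudual}, the restriction $\cD_{6|\cI_B}$ is defined by a linear system containing $|\cI_A(3)+2H|$ as a subsystem, and this subsystem realizes precisely the classical dual map sending $\cI_B$ birationally to its projective dual $\cS_B$, which is a Segre cubic in $\PP^{4\vee}_B$. Concretely, this furnishes a canonical factorization of $\cD_{6|\cI_B}$ through a projection of $\cC_3$ onto a $\PP^{4\vee}_B$, inside which the image of $\cI_B$ is the Segre cubic $\cS_B$, as already remarked after Proposition \ref{igudual}.

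Second, I would assemble these fiberwise pictures into a global fibration. Since the construction of Proposition \ref{igudual} is carried out uniformly in $B$ and is compatible with the $\PGL(3)$-action underlying the whole Coble geometry, varying $B$ over an open subset of $|3K_C|$ yields a rational map $\Psi\colon \cC_3 \dashrightarrow |3K_C|\simeq \PP^4$ whose general fiber is the Segre cubic $\cS_B$. Because $\cD_6$ is birational onto $\cC_3$ and each general fiber $\cI_B$ of the Coble-sextic fibration maps birationally to the corresponding $\cS_B$, the Stein-factorization-type argument (or equivalently a resolution of $\Psi$ by blowing up the base locus analogously to $\widetilde{\pi}_e$ in the proof of Theorem \ref{fibrusa}) produces a flat family of Segre cubics over an open subset of $\PP^4$ birational to $\cC_3$.

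The main obstacle is the precise identification, inside Proposition \ref{igudual}, of the subsystem $|\cI_A(3)+2H|$ on $\cI_B$ with the linear system realizing the projective duality $\cI_4\dashrightarrow \cS_3$. This requires checking that this subsystem has projective dimension exactly $4$ and that it contracts the $15_3$ configuration $A$ to the ten nodes of the Segre cubic, which is a concrete computation on the rigid pair $(\cI_4,\cS_3)$ made feasible by Lemma 3.3.13 of \cite{hu:gsq} (the ideal of $\mathrm{Sing}(\cI_4)$ is generated by the four polar cubics) together with the classical projective description of $\mathrm{Sing}(\cI_4)$. Once this fiberwise duality is established, the fibration statement for $\cC_3$ follows immediately from the uniformity of the construction over $|3K_C|$.
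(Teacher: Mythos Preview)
Your proposal is correct and follows essentially the same route as the paper: transport the Igusa-quartic fibration of $\cC_6$ through the birational duality $\cD_6$, using Proposition~\ref{igudual} to see that each general fiber goes to a Segre cubic. The paper is terser---its Remark after the corollary observes that the birationality alone is trivial (since $\cC_3$ is birational to $\cC_6$, which is birational to a fibration in Igusa quartics, each birational to a Segre cubic), with Proposition~\ref{igudual} supplying the canonical realization---so your worries about Stein factorization and flat global assembly are more than what a birationality statement requires.
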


\begin{remark}
The birationality in itself is trivial, since $\cC_3$ is birational to $\cC_6$ which is birational to a fibration in Igusa quartics (which are in turn all birational to the Segre cubic) over $\PP^4$. The projections on the linear systems $|\cI_A(3)+2H|$ give a constructive canonical way to realize it. 
\end{remark}



\subsection{The Coble quartic}

In this subsection we assume that $C$ is a curve of genus $3$ and we consider the  moduli space $\cS\cU(2,\cO_C)$.  We recall that  the Kummer variety $Kum(C):=J(C)/\pm Id$ of $C$ is contained naturally in the $2\Theta$-linear series, whereas the moduli space $\cS\cU(2,\cO_C)$ is embedded by $\theta$ in $\PP^7\cong |2\Theta|$ as the unique quartic hypersurface $\cC_4$ singular along $Kum(C)$. This hypersurface is called the \textit{Coble quartic}. It is also known \cite{paulydual} that the Coble quartic is projectively self dual.
\hfill\par
Now we  recall some properties of the \textit{Segre cubic}. This is a nodal (and hence rational) cubic three-fold $S_3$ in $\PP^4$ whose singular locus is given by ten double points. There is a natural action of $\Sigma_6$ on this projective space and $S_3$ is invariant with respect to this action. $S_3$ is in fact the GIT quotient $(\PP^1)^6//PGL(2)$ \cite{do:pstf}. Moreover, $S_3$ realizes the so-called \textit{Varchenko bound}, that is, it has the maximum number of double points (ten) that a cubic threefold with isolated singularities may have and  this property identifies the Segre cubic in a unique way, up to projective equivalence. As already stated it is the projective dual variety of the Igusa quartic.

Our construction allows us to give a simple proof of the following result from \cite{albol}.

\begin{proposition}\label{quartic}
The moduli space $\cS\cU(2,\cO_C)$ is birational to a fibration over  $\PP^3$ whose fibers are Segre cubics.
\end{proposition}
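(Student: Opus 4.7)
The plan is to obtain Proposition \ref{quartic} as an immediate specialization of the fibration theorem concluding Section \ref{secdet}, taking $r=2$ and $g=3$. The argument is considerably shorter than its Coble-sextic analogue (Theorem \ref{fibrusa}) because here the theta map is already a closed embedding, so no double cover has to be disentangled and no rigidity-of-the-Igusa-quartic input is needed.

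Concretely, I first fix a divisor $D$ on $C$ of degree $g=3$. Tensoring by $\cO_C(D)$ yields an isomorphism $\mathcal{SU}_C(2,\cO_C)\simeq\mathcal{SU}_C(2,\cO_C(2D))$, while Riemann--Roch gives $h^{0}(C,\cO_C(2D))=4$ since $\deg(2D)=6>2g-2$, so $|\cO_C(2D)|\cong\PP^{3}=\PP^{(r-1)g}$. Next, I combine the birational equivalence $b_D$ of Theorem \ref{thetabirat} with the fundamental map $\Phi$ and with Theorem \ref{genfib}, specialized to $(r,g)=(2,3)$: this is exactly the content of the (unlabeled) final theorem of Section \ref{secdet} and produces a birational fibration $\mathcal{SU}_C(2,\cO_C(2D))\dashrightarrow|\cO_C(2D)|\cong\PP^{3}$ whose general fibre is the GIT quotient $(\PP^{1})^{6}//PGL(2)$. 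Finally, I invoke the classical identification of this GIT quotient with the Segre cubic $S_3\subset\PP^{4}$, recalled in the preamble to the present subsection and attributable to \cite{do:pstf}. Composing the resulting birational equivalences yields the claim.

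No genuine obstacle arises in this proof: the heavy lifting has already been carried out in Section \ref{secdet}, and the only non-formal input is the classical description of $(\PP^{1})^{6}//PGL(2)$, which is simply cited. Were one to strengthen the conclusion to the (stronger) assertion from the introduction that $\cC_4$ itself \emph{contains} a three-parameter family of Segre cubics, one would follow the template of Theorem \ref{fibrusa}: intersect $\cC_4$ with the general $\PP^{4}$-fibre of the linear projection $\pi_e$ attached to $\theta_D$, exploit the inclusion $L_e\subset\cC_4$, and recognize the residual cubic threefold in each fibre as a Segre cubic via its ten-nodal projective characterization. That residuation step would be the most delicate point, but it is not required for the proposition as stated.
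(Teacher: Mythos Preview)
Your argument is correct for the proposition as stated: specializing the final theorem of Section~\ref{secdet} to $(r,g)=(2,3)$ gives a birational fibration of $\mathcal{SU}_C(2,\cO_C(2D))$ over $|\cO_C(2D)|\cong\PP^3$ with general fibre $(\PP^1)^6//PGL(2)$, and the identification of this GIT quotient with the Segre cubic is classical. Nothing more is needed for the bare birationality claim.

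The paper, however, takes a genuinely different and more geometric route. Rather than quoting the general fibration theorem, it exploits the fact that $\theta_D$ is an embedding to realize $\Phi_D$ concretely as the linear projection of $\cC_4\subset\PP^7$ from the center $\PP^3_c=|3K-2D|^*$. It then observes that $\PP^3_c\subset\cC_4$ with multiplicity one, so the closure of each fibre $\overline{\Phi_D^{-1}(B)}$ is the residual \emph{cubic} threefold in the $\PP^4$ spanned by $\PP^3_c$ and the point $B$. Finally, for $B\in|2D|\setminus\Delta_D$ it locates the ten strictly semi-stable points in this fibre (coming from the partitions of the six points of $B$ into two triples) and invokes the Varchenko-bound characterization of $S_3$ as the unique ten-nodal cubic threefold.

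What each approach buys: yours is shorter and formally cleaner, and you rightly note that the rigidity input used for the Igusa quartic is unnecessary here. The paper's argument, on the other hand, proves the stronger statement that the \emph{closures} of the general fibres are actual Segre cubics sitting inside $\cC_4$---exactly the containment claim from the introduction that you flag as requiring extra work. This stronger form is what the paper needs for the subsequent discussion of the restricted dual map $\cD_{4|S_{3B}}$ and the degenerate fibres over $\Delta_D$, so from the paper's point of view the more explicit proof is not gratuitous.
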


\begin{proof}
First of all we twist all vector bundles by a degree 3 divisor $D$, thus obtaining the isomorphic moduli space $\cS\cU(2,\cO_C(2D))$. Let ${\Phi}_D \colon  \cS\cU(2,\cO_C(2D)) \to \vert \cO_C(2D) \vert\cong \PP^3$ be the composition of $b_D$ and the fundamental map $\Phi$. By Thm. \ref{thetabirat}, since $\theta_D$ is an embedding, we can identify  $\cS\cU(2,\cO_C(2D))$ with its image $\cC_4\subset \PP^7$ and  ${\Phi}_D$ with the linear projection  onto  $|2D|$. The center of the projection is the linear span of the locus of vector bundles $E$ s.t. $h^0(C,E\otimes\cO_C(D))>2$. Suppose $E$ is stable. Since it has rank 2 and trivial determinant, then $E\cong E^*$ and by an easy Riemann-Roch computation we find that $h^0(C,E\otimes\cO_C(D))>2$ if and only if $h^0(C,E\otimes \cO_C(K-D)) > 0$. This is equivalent to the fact the $E$ lies in the $\PP^3\cong|3K-2D|^*\subset \cC_4$ that parametrizes vector bundles $E$ that can be written as an extension of the following type

$$0\ra \cO_C(D-K) \ra E \ra \cO_C(K-D)\ra 0.$$

Let us denote by $\PP^3_c$ this projective space. $\cC_4$ contains $\PP^3_c$ with multiplicity one. This implies that the closure of any fiber of the projection ${\Phi}_D :\cC_4 \dashrightarrow \vert 2 D \vert$ is a cubic 3-fold contained in the $\PP^4$ spanned by $\PP^3_c$ and a point of $\vert 2 D \vert $. Let us denote as usual by $\Delta_D$ the intersection of the large diagonal with the linear system $|2D|\subset C^{(6)}$. Then suppose we fix a $B\in |2D| - \Delta_D$. Let us consider the intersection of the fiber of $\Phi_D$ over $B$ with the strictly semi-stable locus. By semi-stability it is easy to see that these points correspond to the partitions of the 6 points of $B$ in complementary subsets of 3 elements each. We have ten of them. As stated above, a cubic 3-fold cannot have more than ten ordinary double points and the Segre cubic is uniquely defined by this singular locus up to projective equivalence.
\end{proof}


Also in this case, if $B\in \Delta_D$ then the intersection $\overline{\Phi_D^{-1}(B)}\cap Kum(C)$ is set-theoretically a finite set of points of cardinality strictly smaller then 10: the singular locus seems to degenerate. It is tempting, as in the case of Igusa quartics, to say that some of these points have multiplicity bigger than one and we obtain \textit{degenerate Segre cubics} over $\Delta_D$.

\medskip

As we have already remarked, in the case of $\cC_4$ the polar map is also well known and described. Let $Y_i$ be the coordinates on $\PP^7\cong |2\Theta|$ and $G(Y_0:\dots:Y_7)$ the quartic equation defining $\cC_4$, then the (self) polar rational map of $\cC_4$ is defined in the following way.

\begin{eqnarray*}
\cD_4:\cC_4 & \dashrightarrow & \cC_4;\\
x & \mapsto & \left[\frac{\partial G}{\partial Y_0}: \dots : \frac{\partial G}{\partial Y_7}\right].
\end{eqnarray*}

Let $B\in |2D| - \Delta_D$ and let $\PP^4_B$ be the linear span of the point corresponding to $B$ and of $\PP^3_c$
It turns out that the restriction of $\cD_4$ to $\PP^4_B$ beahaves in a way very similar to the case of the sextic (see Prop. \ref{igudual}).. Let $S_{3B}\subset \PP^4_B$ be the Segre cubic such that $\cC_4 \cap \PP^4_B = S_{3B} \cup \PP^3_c$. We denote by $J$ the set of 10 nodes of $S_{3B}$. Then the linear series $|\cI_J(2)|$ on $S_{3B}$ is the polar system of the Segre cubic.

\begin{proposition}\label{segdual}
The restricted dual map $\cD_{4|S_{3B}}$ is given by a linear system $|\cD_{S_3}|$ that contains $|\cI_J(2)+H|$ as a linear subsystem.
\end{proposition}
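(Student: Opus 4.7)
The plan is to mimic the proof of Proposition \ref{igudual} for the sextic case, by choosing coordinates that expose the factorisation of the quartic form $G$ on the 4-plane $\PP^4_B$ and then computing partial derivatives.

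First, I would pick homogeneous coordinates $Y_0,\dots,Y_7$ on $\PP^7\cong|2\Theta|$ so that $\PP^4_B=\{Y_5=Y_6=Y_7=0\}$ and, inside this $\PP^4_B$, the fixed hyperplane $\PP^3_c$ is cut out by $L:=Y_4$. Since $\cC_4\cap \PP^4_B = S_{3B}\cup \PP^3_c$ and $\PP^3_c$ appears in this intersection with multiplicity one (as established in the proof of Proposition \ref{quartic}), the quartic $G$ admits a decomposition
\[
G(Y_0,\dots,Y_7) \;=\; Y_4\,Q(Y_0,\dots,Y_4) \;+\; \sum_{j=5}^{7} Y_j\,A_j(Y_0,\dots,Y_7),
\]
where $Q$ is the cubic form cutting out the Segre cubic $S_{3B}\subset \PP^4_B$ and the $A_j$ are suitable cubics.

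Second, I would apply the Leibniz rule and set $Y_5=Y_6=Y_7=0$ to obtain
\[
\left.\frac{\partial G}{\partial Y_i}\right|_{\PP^4_B} \;=\; Y_4\,\frac{\partial Q}{\partial Y_i}\quad (i=0,1,2,3), \qquad \left.\frac{\partial G}{\partial Y_4}\right|_{\PP^4_B} \;=\; Q \;+\; Y_4\,\frac{\partial Q}{\partial Y_4}.
\]
Restricting further to $S_{3B}$, where $Q\equiv 0$, all five of these restrict to $Y_4\cdot (\partial Q/\partial Y_i)|_{S_{3B}}$ for $i=0,\dots,4$. The polar quadrics $\partial Q/\partial Y_i$ vanish along the node set $J=\mathrm{Sing}(S_{3B})$ and span the polar system of the Segre cubic; since the ten nodes impose independent conditions on quadrics in $\PP^4$, this polar system coincides with the whole $\PP^4\cong|\cI_J(2)|$. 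Multiplication by $L=Y_4$ then produces a set of five cubics generating $|\cI_J(2)+H|$ which sits inside the image on $S_{3B}$ of the linear system $|\cD_4|$, yielding the desired inclusion $|\cI_J(2)+H|\subseteq|\cD_{S_3}|$.

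The main obstacle is the initial factorisation $G|_{\PP^4_B}=Y_4\cdot Q$ with $Q$ genuinely cutting out $S_{3B}$: this requires knowing that $\PP^3_c$ sits inside $\cC_4$ with scheme-theoretic multiplicity exactly one and that the residual cubic in $\PP^4_B$ is reduced and coincides with the Segre cubic, both of which are already clear from the proof of Proposition \ref{quartic}. Granting these, the rest of the argument is a routine linear-algebra computation with partial derivatives, in perfect analogy with the sextic/Igusa case of Proposition \ref{igudual}.
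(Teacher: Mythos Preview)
The paper does not actually supply a proof of this proposition (nor of the parallel Proposition~\ref{igudual} in the sextic case); both are stated without argument, so there is nothing to compare your attempt against.

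On its own merits your argument is correct and is the natural one. The coordinate choice adapted to $\PP^4_B$ and $\PP^3_c$ is harmless; the factorisation $G|_{\PP^4_B}=Y_4\cdot Q$ follows from the multiplicity-one statement established in the proof of Proposition~\ref{quartic}; and the Leibniz computation gives exactly what you wrote, so that on $S_{3B}$ the five partials $\partial G/\partial Y_0,\dots,\partial G/\partial Y_4$ become $Y_4\cdot(\partial Q/\partial Y_i)$. The paper already records, just before the proposition, that the polar system of the Segre cubic coincides with $|\cI_J(2)|$, so the identification of the span of these five cubics with $|\cI_J(2)+H|$ is immediate. The remaining three partials $\partial G/\partial Y_j$ for $j=5,6,7$ restrict to the $A_j|_{S_{3B}}$ and simply account for the rest of $|\cD_{S_3}|$; you do not need them for the inclusion claimed. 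Nothing is missing.
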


As in the case of $\cC_6$ this implies that we have a canonical way to construct the birational map of the following corollary via the polar map $\cD_4$.

\begin{corollary}
The Coble quartic is birational to a fibration in Igusa quartics over $\PP^3$.
\end{corollary}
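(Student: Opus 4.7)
As a purely birational statement, the result is essentially formal. By Proposition \ref{quartic}, $\cC_4$ is birational to a fibration over $|2D| \cong \PP^3$ whose generic fibre is a Segre cubic $S_{3B}$, and any Segre cubic is birational to an Igusa quartic (the two are projectively dual rational threefolds, hence in particular birational). The real content of the corollary, as in the analogous remark following the Coble-cubic corollary, is to produce this new fibration canonically through the self-polar map of $\cC_4$.

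I would define the new fibration as the composition
$$ \Psi \colon \cC_4 \stackrel{\cD_4^{-1}}{\dashrightarrow} \cC_4 \stackrel{\Phi_D}{\dashrightarrow} |2D| \cong \PP^3, $$
where $\cD_4$ is the self-polar map, which is birational because $\cC_4$ is projectively self-dual. The generic fibre of $\Psi$ over $B \in |2D|$ is the image $\cD_4(S_{3B})$ sitting inside $\cC_4 \subset \PP^7$, and it remains to identify this fibre birationally with an Igusa quartic.

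This is achieved through Proposition \ref{segdual}: the restriction $\cD_4|_{S_{3B}}$ is given by a linear system $|\cD_{S_3}|$ that contains the subsystem $|\cI_J(2)+H|$, where $J$ is the set of ten nodes of $S_{3B}$ and $H$ is the hyperplane class cutting out $\PP^3_c$ inside $\PP^4_B$. Linearly projecting from $|\cD_{S_3}|$ onto this subsystem, and then removing the base component defined by $H$ (which does not change the projective image of the associated morphism), one recovers the classical polar system $|\cI_J(2)|$ of quadrics through the ten nodes of the Segre cubic. By the well-known Segre/Igusa projective duality, this system maps $S_{3B}$ birationally onto an Igusa quartic in $\PP^4$. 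Consequently $\cD_4(S_{3B})$ is birational to an Igusa quartic, and performing the same linear projection fibrewise over $\PP^3$ realises $\cC_4$ birationally as a fibration in Igusa quartics.

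The main obstacle is to make precise the last linear projection \emph{globally}, i.e.\ to check that the fibrewise projection implementing the Segre-to-Igusa duality is the restriction of a well-defined rational map on the total space $\cC_4$, so that the resulting family is honestly a fibration in Igusa quartics rather than just a family with Igusa-quartic \emph{birational type} of fibres. This is the same bookkeeping needed to conclude the Coble-sextic/Coble-cubic case in Section \ref{sextic}, and once the corresponding base-locus analysis of Proposition \ref{segdual} is in place it is essentially formal.
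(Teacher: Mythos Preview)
Your proposal is correct and follows essentially the same approach as the paper. The paper gives no explicit proof beyond the sentence ``As in the case of $\cC_6$ this implies that we have a canonical way to construct the birational map of the following corollary via the polar map $\cD_4$,'' and your argument---noting that the birational statement is formal from Proposition~\ref{quartic} together with the Segre/Igusa birationality, and then using Proposition~\ref{segdual} to produce the fibrewise Igusa quartic via projection onto $|\cI_J(2)+H|$ and removal of the fixed component $H$---is precisely the intended unpacking of that sentence, mirroring the remark after the Coble-cubic corollary.
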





 \end{document}